\newcommand{\de}{\mathrm{d}}
\newtheorem{definition}{Definition}[]
\newtheorem{lemma}{Lemma}[]
\newtheorem{remark}{Remark}[]
\newtheorem{proposition}{Proposition}[]
\newtheorem{assumption}{Assumption}[]
\newcommand{\R}{\ensuremath{\mathbb{R}}}
\newcommand{\N}{\ensuremath{\mathbb{N}}}
\DeclareMathOperator*{\argmin}{argmin}
\begin{document}
	
	\title{Continuous limits of  residual neural networks in case of large input data}
	\author{M.~Herty\thanks{Institut f\"{u}r Geometrie und Praktische Mathematik (IGPM) -- RWTH Aachen University -- Templergraben 55, 52062 Aachen (Germany) -- \texttt{herty@igpm.rwth-aachen.de}} ,\ A.~Th\"{u}nen\thanks{Institut f\"{u}r Mathematik -- TU Clausthal -- Erzstra\ss e 1,
	38678 Clausthal-Zellerfeld (Germany) -- \texttt{anna.thuenen@tu-clausthal.de}} ,\ T.~Trimborn\thanks{NRW.BANK -- Kavalleriestraße 22, 40213 D\"{u}sseldorf (Germany) -- \texttt{torsten.trimborn@nrwbank.de}} ,\ G.~Visconti\thanks{Department of Mathematics ``G.~Castelnuovo'' -- Sapienza University of Rome -- P.le Aldo Moro 5, 00185 Roma (Italy) -- \texttt{giuseppe.visconti@uniroma1.it}}}
	\maketitle


	\begin{abstract}
		Residual deep neural networks (ResNets) are mathematically described as interacting particle  systems. In the case of  infinitely many layers the ResNet leads to a system of  coupled system of ordinary differential equations known as neural differential equations. For large scale input data we derive a mean--field limit and show  well--posedness of the resulting  description. Further, we analyze the existence of solutions to the training process by using both a controllability and an optimal control point of view. Numerical investigations based on the solution of a formal optimality system illustrate the theoretical findings.
	\end{abstract}
	
	\paragraph{Mathematics Subject Classification (2020)} 35Q83, 49J15,	49J20, 92B20
	
	\paragraph{Keywords} Neural networks, mean--field limit, well--posedness, optimal control, controllability
	
	\section{Introduction} \label{sec:introduction}
	
	In the last years, there has been a growing interest in machine learning and data science applications~\cite{Wooldridge2020,LalmuanawaHussainChhakchhuak2020,MuellerVincentBostrom2016}, e.g.~in the fields of recognition of human speech~\cite{MengistuRudzicz2011},  competition at the highest level in strategic game systems~\cite{LiXingFenghuaCheng2020},  intelligent routing in content delivery networks~\cite{FadlullahZubairMaoTangKato2019}, and  autonomously operating cars~\cite{STERN2018}. The intersection between mathematics and artificial intelligence has been mainly based on using machine learning tools to resolve bottlenecks in existing numerical methods, e.g.~to replace parameter optimization, parameter--identification and data assimilation methods, or shock--detection techniques for non--oscillatory reconstructions, or to model physics--based operators through experimental data and uncertainty quantification. We refer to  ~\cite{Mishra2019,LyeMishraRay2020,ZhangGuoKarniadakis2020,RaissiPerdikarisKarnidakis2019,DiscacciatiHesthavenRay2020,RayHesthaven2019,MagieraRayHesthavenRohde2020,RayHesthaven2018} for additional references on this topics. Here, we contribute to a  framework for a particular class of learning--based methods, the deep residual neural networks (ResNets), using a description based on partial differential equations, more precisely, linear kinetic equations. This formulation allows to apply different techniques to analyze theoretical properties of the underlying neural network. 
	
	First, we briefly recall the residual neural networks (ResNet), see also equation \eqref{eq:resnet}. 	
	 Given a set of input data or measurements $x_i^0, i=1,\dots,M$, the ResNet propagates those through discrete entities, the layers $\kappa=0,\dots,L+1$, to provide a state prediction $x_i(L+1)$. The dynamics depends on (a large set of) parameters, called weights $w(\kappa)$ and bias $b(\kappa)$. Their values are chosen in an optimization procedure called training to predict given reference data $y_i$. This training procedure is performed by  minimizing a given distance $\ell$ between  predictions of the network and the given reference values.
	
	Neural networks, and in general machine learning models, are typically processed on very large data sets, formally $M\to\infty$, making both the forward and the training processes computationally costly. An attempt to provide a more synthetic, and statistical, description of the neural network dynamics has been investigated in~\cite{HTV_MFResNet}, where a kinetic formulation of neural differential equations is proposed. However, neither the training nor the well--posedness have been analyzed so far. Using a mean--field or kinetic description of large--scale neural networks has so far only be discussed for particular examples in a few recent manuscripts~\cite{Crevat2019,MeiMontanariNguyen2018,SirignanoSpiliopoulos2020a,SirignanoSpiliopoulos2020b,BaccelliTaillefumier2019,TrimbornGersterVisconti2020}. A general investigation in particular in view of large input data is to the best of our knowledge still open.
	
	In this work we contribute towards the mean--field description of neural networks by  discussing well--posedness of the mean--field residual neural network using results obtained in the context of pedestrian dynamics~\cite{tosin}. The arising equation is similar to the 
	mean--field model formally proposed in~\cite{HTV_MFResNet}. However, therein the training process has not been discussed. Here, we follow 
	two directions. Training is formulated as a controllability problem for the mean--field equation. This problem allows for solutions for very particular initial and reference data. In the general case, the training process is considered as an optimal control problem with constraints given by the mean--field equation. Here, the derived continuous dependence on the parameters is used to show existence of optimal weights and bias. A numerical method for computing those based on the mean--field equation is implemented and computational results are presented.

	The structure of the paper is shortly summarized here. In Section~\ref{sec:preliminary} we introduce deep residual neural networks and discuss formally the equations resulting in time and in the mean--field continuous limits. Section~\ref{sec:analysis} is the main part of this work since we provide a rigorous analysis of the mean--field equation. In Section~\ref{sec:training} we discuss a computational technique for the training of the mean--field neural network and numerical experiments are performed. Finally, we conclude the paper in Section~\ref{sec:conclusion} proposing also research perspectives.
	
		
	\section{Continuous limits of residual neural networks} \label{sec:preliminary} 
	
	Let us consider a set of $M\in\N$, $M\gg 1$, input data characterized by $d$ measurements. In terms of neural networks, each measurement represents a feature of the given input.
	Without loss of generality it is possible to assume that the value of each feature is one--dimensional so that each input data can be described by $x_i^0\in\R^d$, $i=1,\dots,M$.
		
	As starting point we consider deep Residual Neural Networks (ResNets). Its structure is given by  $L$ hidden layers, with labels $\{1,\dots,L\}$, and in each layer the number of neurons is given by $N_\kappa$, $\forall \kappa=1,\dots,L$. We use the indices $\kappa=0$ and $\kappa=L+1$ to denote the input layer and the output layer, respectively. The state of the $i$--th input signal at the $\kappa$--th layer is $x_i(\kappa)\in\R^{N_\kappa}$ and $N_0=d$. The final state $x_i(L+1)\in\R^{N_{L+1}}$ is called {\em output } or prediction of the network.
	
	Each input signal $x_i^0$ propagates according to the deterministic dynamics~\cite{He2015DeepRL}:
	\begin{equation} \label{eq:resnet}
		\begin{cases}
		x_i(\kappa+1)=A(\kappa)x_i(\kappa)+\Delta t \, \sigma\big( w(\kappa) x_i(\kappa)+b(\kappa) \big), \quad \kappa=0,\dots,L\\
		x_i(0)=x_i^0.
		\end{cases}
	\end{equation}
	Here, $w(\kappa) \in\R^{N_{\kappa+1}\times N_\kappa}$ are the weights and $b(\kappa)\in\R^{N_{\kappa+1}}$ the bias and $\Delta t>0$ indicates a (pseudo) time step. The vectors $(w,b)$ define the  parameters of the network. The matrix $A(\kappa) \in R^{N_{\kappa+1}\times N_\kappa}$ is a deterministic matrix which reduces to an identity matrix under Assumption~\ref{ass:neurons}, cf.~the next section, and therefore we do not provide a rigorous definition. The function $\sigma: \R\to \R$ is called activation function of the neurons and it is applied component wise in~\eqref{eq:resnet}. Examples of activation functions include the identity function $\sigma(x) = x$, the rectified linear unit (ReLU) function $\sigma(x) = \max\{0,x\}$, the sigmoid function $\sigma(x) = \frac{1}{1+\exp(-x)}$, the hyperbolic tangent function $\sigma(x) = \tanh(x)$ and the growing cosine unit (GCU) function $\sigma(x)=x\cos(x)$. 
	
	The parameters $w$ and $b$ are  chosen in a {\em training process} in order to have ResNet solve a given learning problem.  In {\em  supervised training } we have that \emph{desired} outputs, the targets or reference values, are provided along with the input data. The network processes the input data and then compares the predictions against the targets $\{y_i\}_{i=1}^M$. The error is then propagated back through the network with the aim of optimizing the parameters. This process occurs many times on a set of data which is typically named as training data set and several approaches are known, as e.g.  stochastic gradient descent~\cite{haber2018look} or ensemble Kalman filter~\cite{Kovachki2019,Watanabe1990,Alper}, and they are related to the choice of  the {\em loss functions }. Typical loss functions~\cite{LossFunctions}, for instance such as Mean Squared Error, the Mean Absolute Error and the Categorical Cross--Entropy, can be all written as
	\begin{equation} \label{eq:loss}
		\frac{1}{M} \sum_{i=1}^M \ell(x_i(L+1)-y_i),
	\end{equation}
	for suitable choices of a (differentiable) function $\ell:\R^{N_{L+1}}\to\R^+_0$.
	
	
	\subsection{Neural differential equations and mean--field limit}
	
	In~\eqref{eq:resnet} the layers define a discrete structure within the ResNet. In order to compute the time continuous limit of~\eqref{eq:resnet} we interpret the layers as discrete times where the propagation of the input signal is evaluated. To this end we need to introduce the following assumption.
	
	\begin{assumption} \label{ass:neurons}
		The number of neurons in each layer is fixed and determined by the dimension of the input data. Namely, $N_\kappa = N = d$, $\forall\,\kappa=1,\dots,L+1$.
	\end{assumption}

	This assumption typically underlies the derivation of neural differential equations, see also~\cite{chen2018neural}. Note that even the choice $d=1$ is possible and, in addition, that networks of this type  have been already proved to satisfy different formulations of the universal approximation theorem~\cite{UniversalApproximator,LuLu2020,kidger2020universal}. Further, they have been also applied to several (real--world) training problems~\cite{GebhardtTrimborn,Bobzin2021}.
	
	Under Assumption~\ref{ass:neurons} and interpreting $\Delta t$ as the size of a time step, \eqref{eq:resnet} is an explicit Euler discretization of an underlying differential equation. Namely, in the limit $\Delta t\to 0^+$ and $L\to \infty$ such that $\Delta t(L+2) \to T$, \eqref{eq:resnet} formally leads to
	\begin{equation} \label{eq:micro}
	\begin{cases}
	\displaystyle{\frac{\mathrm{d}}{\mathrm{d}t}} x_i(t) = \sigma\big( w(t) x_i(t) + b(t) \big), \quad t\in[0,T]\\
	x_i(0) = x_i^0,
	\end{cases}
	\end{equation}
	for each $i=1,\dots,M$. The system of differential equations~\eqref{eq:micro} describes the time propagation of each measurement $x_i(t)\in\R^d$, starting from the initial condition $x_i^0\in\R^d$ fixed by the input data. It is known as {\em neural differential equation}. 
	\par 
	 The parameters of the network are given by the time dependent weights $w(t)\in\R^{d\times d}$ and by the time dependent bias $b(t)\in\R^d$, $\forall\,t\geq0$.  By the Picard--Lindel\"{o}f Theorem, existence and uniqueness of a solution to~\eqref{eq:micro} is guaranteed as long as the activation function $\sigma$ satisfies the Lipschitz condition and $t\mapsto w(t)$, $t\mapsto b(t)$ are continuous. Notice that the loss functional~\eqref{eq:loss} reads 
	\begin{equation} \label{eq:lossMicro}
		\frac{1}{M} \sum_{i=1}^M \ell(x_i(T)-y_i),
	\end{equation}
	 where $x_i(T)$ represents the state at time $T>0$ obtained with~\eqref{eq:micro}.
	
	It is clear from~\eqref{eq:micro} that the computational and memory cost of the neural network still increases with the dimension of the data set, i.e.~$M$. A way to overcome this problem is introducing a statistical interpretation of the neural network by computing the mean--field limit of the neural differential equations~\eqref{eq:micro} for $M\to \infty.$ In the limit of infinitely many data we formally obtain the linear equation 	
	\begin{equation} \label{eq:meso}
		\begin{cases}
			\partial_t f(t,x) + \nabla_x\cdot \Big(\sigma\big(w(t)x+b(t)\big)f(t,x)\Big) = 0, \quad t>0 \\
			f(0,x) = f_0(x),
		\end{cases}
	\end{equation}
	which describes the evolution of the distribution $f:\R_{\geq0}\times\R^d\to\R_{\geq0}^d$ of the data. The initial condition $f_0(x)$ is obtained as limit of the input data. Since~\eqref{eq:meso} preserves the mass, $f(t,x)$ is  a probability distribution $\forall\,t>0$ provided that $f_0$ is. We point out that in the mean--field limit any information on the network output of a precise measurement is lost. In fact, \eqref{eq:meso} provides only a statistical information on the neural network propagation and, thus, of the learning problem. The well--posedness of equation~\eqref{eq:meso} and the convergence of~\eqref{eq:micro} to~\eqref{eq:meso} as $M\to\infty$ is proven in $1$--Wasserstein distance, see Section~\ref{ssec:wellposed}.
	
	\section{Analysis of the mean--field limit} \label{sec:analysis}
	
	In this section, we discuss the mean--field limit of the system~\eqref{eq:micro} and related minimization problems following results of  \cite[Section 6 and 7]{tosin}. We start with recalling some preliminary notation and refer to \cite{ambrosioGradientFlowsMetric2008,VillaniBook} for more details.
	
	Let $\mathcal{P}(\R^d)$ the set of real--valued probability measures defined on $\R^d$ and, for $p \geq 1$, we denote
	by $\mathcal{P}_p(\R^d) \subset \mathcal{P}(\R^d)$ the set of probability measures with finite $p$--th moment, i.e.
	$$
		\mathcal{P}_p(\R^{d}) = \left\{ \mu\in\mathcal{P}(\R^d) \colon \int_{\R^{d}} |x|^p \de \mu(x) < +\infty \right\}.
	$$
	Throughout the paper we denote by $\mu_t$ a time dependent probability measure for $t\in\R^+_0$.
	
	Given a map $\gamma \colon \R^d \to \R^{d}$, the push--forward of $\mu\in\mathcal{P}(\R^d)$ through $\gamma$ is defined for every Borel set $A \subset \R^d$ as the unique probability measure $\gamma \# \mu$ such that $ \gamma \# \mu (A) \colon = \mu(\gamma^{-1}(A))$. Given two probability measures $\mu$, $\nu \in \mathcal{P}(\R^d)$, a probability measure $\pi$ on the product space $\mathbb{R}^d \times \mathbb{R}^d$ is said to be an admissible transport plan from $\mu$ to $\nu$ if the following properties hold: 
	\begin{equation*}
	\label{eq:transp_plan}
	\int_{y\in \mathbb{R}^d} \,\de \pi(x,y) = \de\mu(x), \quad \int_{x\in \mathbb{R}^d} \,\de\pi(x,y)= \de\nu(y).
	\end{equation*}
	We denote the set of admissible transport plans from $\mu$ to $\nu$ by $\Pi(\mu,\nu)$. Note that the set $\Pi(\mu, \nu)$ is always nonempty, since the product $\mu \nu \in \Pi(\mu, \nu)$. The cost of each admissible transport plan $\pi$ from $\mu$ to $\nu$ can be defined as follows: 
	$$J[\pi]\colon = \int_{\R^{2d}} |x-y|^p \de\pi(x,y),$$
	where $|\cdot|$ represents the Euclidean norm on $\R^d$. 
	A minimizer of $J$ in $\Pi(\mu, \nu)$ always exists. Thus for any two measures $\mu, \nu \in \mathcal{P}_p(\R^d)$, one can define the following metric 
	$$W_p(\mu, \nu) \colon = \left(\min\limits_{\pi\in \Pi(\mu, \nu)} J[\pi]\right)^{\frac{1}{p}},$$
	which is called the $p$--Wasserstein distance. The set of transport plans $\pi\in\Pi(\mu,\nu)$ achieving this optimal value is denoted by $\Pi_0(\pi,\nu)$ and is referred to as the
	set of optimal transport plans between $\mu$ and $\nu$. The space of probability measures $\mathcal{P}_p(\R^d)$ endowed with the $p$--Wasserstein distance is called the Wasserstein space of order $p$.
	
	\begin{table}[t]
			\caption{List of the microscopic and mean--field quantities.}
			\begin{center}
				\begin{tabular}{cc|cc}
					\hline
					\multicolumn{2}{c|}{Microscopic} & \multicolumn{2}{|c}{Mean--field}\\
					\hline\hline
					Trajectories of~\eqref{eq:micro} & $x_i(t)\in\R^d$ & Weak solution of~\eqref{eq:meso} & $f_t\in\mathcal{P}_1(\R^{d})$ \\
					Augmented trajectories of~\eqref{003} & $(x_i(t),\tau_i(t))\in\R^{d+1}$ & Weak solution of~\eqref{wmf-mh} & $F_t\in\mathcal{P}_1(\R^{d+1})$ \\
					Target data & $y_i\in\R^d$ & Target measure & $g\in\mathcal{P}_1(\R^{d})$ \\
					Loss function~\eqref{eq:lossMicro} & $\ell\colon\R^d\to\R$ & Loss function~\eqref{eq:lossMF} & $\tilde{\ell}\colon\R^d\to\R$ \\
					\hline
				\end{tabular}
			\end{center}
			\label{tab:mathSymbols}
	\end{table}
	
	Finally, in order to help the reader, we report in Table~\ref{tab:mathSymbols} a list of the microscopic and mean--field objects used in the analysis performed in the subsequent sections.
		
	\subsection{Well--posedness of weak solutions} \label{ssec:wellposed}
	
	We notice that the microscopic system~\eqref{eq:micro} describing a neural differential equation can be recast as an autonomous system using the auxiliary variables $\tau_i=\tau_i(t)\in\mathbb{R}$ for $i=1,\dots, M$:
	\begin{equation} \label{003}
	\begin{cases}
	\displaystyle{\frac{\mathrm{d}}{\mathrm{d}t}} x_i(t) = \sigma\big( w(\tau_i(t)) x_i(t) + b(\tau_i(t)) \big), & x_{i}(0)=x_i^0 \\[2ex]
	\displaystyle{\frac{\mathrm{d}}{\mathrm{d}t}} \tau_i(t) = 1, & \tau_i(0)=0.
	\end{cases}
	\end{equation}
	In the following, the right hand side of~\eqref{003} will be compactly denoted
	using the function
	\begin{equation} \label{001}
	\begin{aligned}
	G: \mathbb{R}^{d+1} &\to \mathbb{R}^{d+1} \\
	(x,\tau) &\mapsto \Big( \sigma\big(w(\tau) x + b(\tau) \big), 1 \Big)^\top.
	\end{aligned}
	\end{equation}

	\begin{definition}
	Let $T>0$ be fixed. Assume that $F_0 \in \mathcal{P}_1(\R^{d+1})$. We say that the time dependent measure $F_t \in C([0,T]; \mathcal{P}_1(\R^{d+1}))$ is a weak solution to the mean--field equation 
	\begin{equation}\label{mf-mh}
		\partial_t F_t + \nabla_x \cdot \Big( \sigma\big( w(\tau) x + b(\tau) \big) F_t \Big) + \partial_\tau F_t = 0
	\end{equation}
	with initial condition $F_0$ if for all $\phi = \phi(x,\tau) \in C^\infty_0(\R^{d+1})$ and for all $t\in [0,T]$ the following equality holds:
	\begin{equation} \label{wmf-mh}
	\begin{aligned}
	\int_{\R^{d+1}} \phi(x,\tau) \mathrm{d} F_t(x,\tau) =& 	\int_{\R^{d+1}} \phi(x,\tau) \mathrm{d}F_0(x,\tau) \\
	& + \int_0^t \int_{\R^{d+1}} \nabla_{(x,\tau)}\phi(x,\tau) \cdot G(x,\tau) \mathrm{d}F_s(x,\tau) \mathrm{d}s.
	\end{aligned}
	\end{equation}
	\end{definition}

	Existence and uniqueness of a weak solution $F_t$ of the mean--field equation~\eqref{eq:meso} is obtained under the following assumptions, see~\cite[Section 6.1 and Section 6.2]{tosin} and Proposition~\ref{prop:wellposedness} below:
	\begin{align*}
	\text{(A1)} & \quad \quad \sigma \in C^{0,1}(\R^d), \ w, b \in C^{0,1}(\R);\\
	\text{(A2)} & \quad \quad |\sigma(x) | \leq C_0, \ \forall x \in \R^d. 
	\end{align*}
	
	\begin{remark}
		We observe that Assumption (A2) requires that the activation function $\sigma$ is bounded. This property is verified for some choices of the activation function, e.g.~if $\sigma$ is the hyperbolic tangent function or the sigmoid function, but not in general. However, the results of this section are true if the kinetic measure $F_0$ has compact support, which implies that any $\sigma$ is bounded on the support of $F_0$.
	\end{remark}
	
	\begin{definition} \label{def:flow}
		We define the flow associated to the mean--field equation~\eqref{mf-mh} as the map $\Phi_t:(x,\tau)\in\R^{d+1}\mapsto\Phi_t(x,\tau)\in\R^{d+1}$  such that
		\begin{equation}\label{Phi}
		\begin{cases}
		\partial_t \Phi_t(x,\tau) = G(\Phi_t(x,\tau) ) \\
		\Phi_0(x,\tau) =(x,\tau).
		\end{cases}
		\end{equation}
	\end{definition}

	\begin{proposition} \label{prop:wellposedness}
		Let $F_0\in \mathcal{P}_1(\R^{d+1})$ be given and let $T>0.$ Then, under the assumptions (A1) and (A2), there exists a unique solution $F_t\in C([0,T]; \mathcal{P}_1(\R^{d+1}))$ of the mean--field equation~\eqref{mf-mh}, in particular $F_t = \Phi_t\#F_0$ and $F_t$ is continuously dependent on the initial data $F_0$ with respect to the $1-$Wasserstein distance.Furthermore, the solution of the dynamical system~\eqref{003}--\eqref{001} converges to $F_t$ in $1-$Wasserstein for $M\to\infty$.
	\end{proposition}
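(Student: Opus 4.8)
\emph{Proof sketch.} The plan is to reduce the statement to the classical well--posedness theory for linear transport equations with a bounded Lipschitz velocity field, following \cite[Sections~6.1--6.2]{tosin}, and to exploit the particular structure of the augmented system, namely that the artificial time variable satisfies $\dot\tau=1$. First I would collect the regularity properties of the field $G$ in \eqref{001}. By (A2) its $x$--component is bounded by $C_0$ and its $\tau$--component equals $1$, so $|G|\le\sqrt{C_0^2+1}$ on the whole of $\R^{d+1}$. Moreover, any characteristic of \eqref{Phi} issued at $\tau=0$ satisfies $\tau(t)=t\in[0,T]$, so only the restrictions of $w$ and $b$ to the compact interval $[0,T]$ are relevant; there they are bounded and Lipschitz by (A1). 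Combined with the Lipschitz continuity of $\sigma$, this shows that $G$ is Lipschitz in $x$ uniformly in $\tau\in[0,T]$ and Lipschitz in $(x,\tau)$ on every slab $\{|x|\le R\}$ (the $\tau$--modulus grows linearly in $|x|$). If, as in the Remark, $F_0$ has compact support, the boundedness hypothesis on $\sigma$ can be dropped because the dynamics stays in a fixed compact set.

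Next comes the flow. By Cauchy--Lipschitz, for each $z=(x,\tau)$ the ODE \eqref{Phi} has a unique maximal solution, which is global on $[0,T]$ because $|\dot x|\le C_0$ rules out finite--time blow--up; this defines $\Phi_t$. Grönwall's inequality gives $|\Phi_t(z)-\Phi_t(z')|\le e^{Lt}|z-z'|$ for a constant $L$ depending on the Lipschitz data, so $\Phi_t$ is a bi--Lipschitz homeomorphism of $\R^{d+1}$, and $|\Phi_t(z)-z|\le t\sqrt{C_0^2+1}$ shows it has sublinear growth. Setting $F_t:=\Phi_t\#F_0$, the growth bound yields $\int|z|\,\de F_t(z)\le\int|z|\,\de F_0(z)+t\sqrt{C_0^2+1}<\infty$, so $F_t\in\mathcal P_1(\R^{d+1})$, while $W_1(F_t,F_s)\le\sqrt{C_0^2+1}\,|t-s|$ (couple through $(\Phi_t\times\Phi_s)\#F_0$) gives $F_t\in C([0,T];\mathcal P_1(\R^{d+1}))$. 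That $F_t$ solves \eqref{wmf-mh} then follows by differentiating $t\mapsto\int_{\R^{d+1}}\phi(\Phi_t(z))\,\de F_0(z)$ for $\phi\in C_0^\infty$, using $\partial_t\Phi_t=G(\Phi_t)$ and changing variables back with $F_s=\Phi_s\#F_0$, and integrating in time.

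The delicate point is \emph{uniqueness} of weak solutions. Here I would appeal to the superposition principle in the form used in \cite{tosin}: any weak solution $F_t$ of \eqref{mf-mh} can be represented as the image of $F_0$ under the flow of the characteristic ODE, and since that ODE is uniquely solvable by the previous step, this forces $F_t=\Phi_t\#F_0$; alternatively one can run a Holmgren--type duality argument against the backward transport equation. Continuous dependence then follows directly from the representation: if $\pi_0\in\Pi_0(F_0,\tilde F_0)$ is optimal, then $(\Phi_t\times\Phi_t)\#\pi_0\in\Pi(\Phi_t\#F_0,\Phi_t\#\tilde F_0)$, whence $W_1(F_t,\tilde F_t)\le e^{Lt}W_1(F_0,\tilde F_0)$.

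Finally, for the particle limit, I would observe that the augmented microscopic system \eqref{003} is exactly the characteristic system \eqref{Phi}, so the empirical measure $F_t^M:=\tfrac1M\sum_{i=1}^M\delta_{(x_i(t),\tau_i(t))}$ equals $\Phi_t\#F_0^M$ with $F_0^M:=\tfrac1M\sum_{i=1}^M\delta_{(x_i^0,0)}$; the stability estimate then gives $W_1(F_t^M,F_t)\le e^{Lt}W_1(F_0^M,F_0)$, which tends to $0$ as $M\to\infty$ whenever the input data are sampled so that $W_1(F_0^M,F_0)\to0$ (e.g.\ i.i.d.\ draws from $f_0$ together with the Wasserstein law of large numbers). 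The main obstacle is thus the uniqueness step, together with the fact that $G$ is only locally Lipschitz in $(x,\tau)$; both are handled either by propagation of a compact support or, in general, by the boundedness of the $x$--velocity, which confines the relevant characteristics to a region where $G$ is globally Lipschitz.
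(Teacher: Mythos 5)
Your proposal is correct and follows essentially the same route as the paper: well--posedness of the characteristic flow \eqref{Phi} for the Lipschitz, bounded field $G$, the push--forward representation $F_t=\Phi_t\#F_0$ for existence and uniqueness, a Dobrushin/Gr\"onwall stability estimate in $W_1$ for continuous dependence, and the observation that the empirical measure of \eqref{003} is itself a weak solution, so that the same stability estimate yields the mean--field limit. The only difference is that you unpack the details the paper delegates to \cite[Lemma 6.1 and Section 6.2]{tosin}, and you are in fact more careful than the paper on one point, namely that $G$ is only locally Lipschitz in $(x,\tau)$ (the $\tau$--increments contribute a term proportional to $|x|$), which you correctly flag and resolve.
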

	\begin{proof}
		Under the assumptions (A1) and (A2) we have that $G$ defined by equation~\eqref{001} is also Lipschitz and uniformly bounded for all $(x,\tau) \in \R^{d+1}.$ Hence, due to \cite[Lemma 6.1]{tosin}, the flow $\Phi_t$ introduced in Definition~\ref{def:flow}
		is well--defined and Lipschitz in $(x,\tau)$ and $F_t=\Phi_t \# F_0$ for $F_0 \in \mathcal{P}_1(\R^{d+1})$ is the unique weak solution of equation~\eqref{mf-mh} in the sense of~\eqref{wmf-mh}.
		Furthermore, under the assumptions (A1) and (A2), any two weak solutions $F_t^{(1)},F_t^{(2)}$ in the sense of equation~\eqref{wmf-mh}, obtained from initial conditions $F_0^{(1)},F_0^{(2)}$, respectively, fulfill the Dobrushin's stability estimate in $1-$Wasserstein distance. The Dobrushin's inequality allows us to prove the convergence of the solutions of the dynamical system~\eqref{003}--\eqref{001} to $F_t$. In fact, we first observe that if we consider the initial condition
		\begin{equation} \label{initial-mh}
			\de F^M_0(x,\tau)=\frac1M \sum\limits_{i=1}^M \delta(x-x_{i}^0)\delta(\tau)
		\end{equation}
		with $x_i^0$ prescribed by~\eqref{003}, then the following empirical measure
		$$
			\de F^M_t(x,\tau)=\frac1M \sum_{i=1}^M \delta(x - x_i(t))\delta(\tau- \tau_i(t))
		$$
		is a weak solution of~\eqref{mf-mh} in the sense of~\eqref{wmf-mh}, where $(x_i(t),\tau_i(t))$ are the trajectories given by the dynamical system~\eqref{003} for any $i=1,\dots,M$. The previous consideration follows from a classical derivation, see e.g.~\cite{Golse}. Hence, if the initial empirical measure~\eqref{initial-mh} converges in $1-$Wasserstein distance $W_1$ to some $\bar{F}_0 \in \mathcal{P}_1(\R^{d+1})$ for $M\to \infty$, using the Dobrushin's estimate 
		\begin{align*}
			W_1\big( \bar{F}_t,F^M_t \big) \leq C \; W_1\big(\bar{F}_0, F^M_0\big),
		\end{align*}
		with $C$ being a constant and $\bar{F}_t=\Phi_t\#\bar{F}_0$, we obtain that~\eqref{mf-mh} is the mean--field limit of the particle dynamics~\eqref{003} for $M\to\infty$.
	\end{proof}

	The previous proposition shows that the mean--field limit can be obtained provided that the controls $w,b\in C^{0,1}(\R).$ As further result we establish the dependence on the functions $(w,b).$
	
	\begin{proposition}
		Let $F_0\in \mathcal{P}_1(\R^{d+1})$ be given and let $T>0.$ Then, under the assumptions (A1) and (A2), the unique solution $F_t\in C([0,T]; \mathcal{P}_1(\R^{d+1}))$ of the mean--field equation~\eqref{mf-mh} is continuously dependent on $(w,b)$.
	\end{proposition}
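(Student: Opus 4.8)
The plan is to use the representation $F_t=\Phi_t\#F_0$ from Proposition~\ref{prop:wellposedness} and to trace the dependence on $(w,b)$ through the vector field $G$ and its flow $\Phi_t$. Fix two pairs $(w^{(1)},b^{(1)})$ and $(w^{(2)},b^{(2)})$ satisfying (A1)--(A2), let $G^{(1)},G^{(2)}$ be the associated vector fields~\eqref{001}, $\Phi^{(1)}_t,\Phi^{(2)}_t$ the induced flows~\eqref{Phi}, and $F^{(1)}_t=\Phi^{(1)}_t\#F_0$, $F^{(2)}_t=\Phi^{(2)}_t\#F_0$. Since the map $(x,\tau)\mapsto\big(\Phi^{(1)}_t(x,\tau),\Phi^{(2)}_t(x,\tau)\big)$ pushes $F_0$ forward to an element of $\Pi\big(F^{(1)}_t,F^{(2)}_t\big)$, one gets at once
\begin{equation*}
W_1\big(F^{(1)}_t,F^{(2)}_t\big)\leq \int_{\R^{d+1}} \big|\Phi^{(1)}_t(x,\tau)-\Phi^{(2)}_t(x,\tau)\big|\,\de F_0(x,\tau),
\end{equation*}
so the statement reduces to a pointwise estimate on the difference of the two flows.

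For that pointwise estimate I would run a standard Gronwall argument. Writing $z=(x,\tau)$ and subtracting the integral forms of~\eqref{Phi} for the two flows, split
\begin{equation*}
\Phi^{(1)}_t(z)-\Phi^{(2)}_t(z)=\int_0^t\!\Big(G^{(1)}(\Phi^{(1)}_s(z))-G^{(1)}(\Phi^{(2)}_s(z))\Big)\de s+\int_0^t\!\Big(G^{(1)}(\Phi^{(2)}_s(z))-G^{(2)}(\Phi^{(2)}_s(z))\Big)\de s .
\end{equation*}
The first integrand is bounded by $\mathrm{Lip}(G^{(1)})\,|\Phi^{(1)}_s(z)-\Phi^{(2)}_s(z)|$ and the second by $\big|G^{(1)}-G^{(2)}\big|$ evaluated along the second trajectory; Gronwall's lemma then yields
\begin{equation*}
\big|\Phi^{(1)}_t(z)-\Phi^{(2)}_t(z)\big|\leq e^{\mathrm{Lip}(G^{(1)})\,t}\int_0^t\big|G^{(1)}(\Phi^{(2)}_s(z))-G^{(2)}(\Phi^{(2)}_s(z))\big|\,\de s .
\end{equation*}
Recalling $G^{(i)}(X,\tau)=\big(\sigma(w^{(i)}(\tau)X+b^{(i)}(\tau)),1\big)^\top$, the last components cancel and Lipschitz continuity of $\sigma$ from (A1) gives $\big|G^{(1)}(X,\tau)-G^{(2)}(X,\tau)\big|\leq \mathrm{Lip}(\sigma)\big(\|w^{(1)}-w^{(2)}\|_\infty|X|+\|b^{(1)}-b^{(2)}\|_\infty\big)$, the sup norms being taken over the bounded range of $\tau$--values reached from $\mathrm{supp}\,F_0$ up to time $T$.

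The only remaining point is to control the factor $|X|=|X^{(2)}_s(z)|$ above, which is where (A2) enters: since the $x$--component of $G$ is bounded by $C_0$, the characteristics grow at most linearly, $|X^{(2)}_s(z)|\leq|x|+C_0 T$. Substituting this and integrating in $z$ against $F_0$, which has finite first moment, yields
\begin{equation*}
\sup_{t\in[0,T]}W_1\big(F^{(1)}_t,F^{(2)}_t\big)\leq T\,e^{\mathrm{Lip}(G^{(1)})T}\,\mathrm{Lip}(\sigma)\left(\Big(\int_{\R^{d+1}}|x|\,\de F_0(x,\tau)+C_0 T\Big)\|w^{(1)}-w^{(2)}\|_\infty+\|b^{(1)}-b^{(2)}\|_\infty\right),
\end{equation*}
which is a local Lipschitz estimate; letting $(w^{(2)},b^{(2)})\to(w^{(1)},b^{(1)})$ uniformly on $[0,T]$ then gives continuous dependence of $t\mapsto F_t$ in $C([0,T];\mathcal{P}_1(\R^{d+1}))$ on $(w,b)$. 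I expect the main obstacle to be precisely this last step: since $F_0$ need only have a finite first moment, $\big|G^{(1)}-G^{(2)}\big|$ cannot be bounded uniformly over $\R^{d+1}$, so one must combine the a priori linear growth of the characteristics (a consequence of (A2), or of the compact support of $F_0$ as in the Remark) with the integrability of $|x|$ with respect to $F_0$; the remaining estimates merely repeat the flow bounds already invoked via \cite[Lemma 6.1]{tosin} in the proof of Proposition~\ref{prop:wellposedness}.
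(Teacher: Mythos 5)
Your argument is correct and, at bottom, it is the same stability-of-characteristics mechanism the paper relies on; the difference is that the paper's proof consists of a single appeal to \cite[Proposition 7.2]{tosin}, which directly delivers the Gronwall-type bound \eqref{002} $W_1(\Phi^{(w,b)}\#F_0,\Phi^{(\bar w,\bar b)}\#F_0)\leq \frac{\exp(Lt-1)}{L}\|(w,b)-(\bar w,\bar b)\|_{C^0(0,T)}$ with $L$ the larger of the two Lipschitz constants of $G$, whereas you reconstruct that estimate from scratch (coupling by $(\Phi^{(1)}_t,\Phi^{(2)}_t)\#F_0$, splitting the flow difference, Gronwall). What your version buys is a point the one-line citation glosses over: the perturbation $G^{(1)}-G^{(2)}$ is \emph{not} uniformly bounded in $x$, since $|G^{(1)}(X,\tau)-G^{(2)}(X,\tau)|\lesssim \|w^{(1)}-w^{(2)}\|_\infty|X|+\|b^{(1)}-b^{(2)}\|_\infty$, so a stability lemma phrased in terms of $\|G^{(1)}-G^{(2)}\|_{C^0}$ does not apply verbatim; you repair this by using (A2) to get linear growth of the characteristics and then integrating $|x|$ against $F_0\in\mathcal{P}_1$, which yields a constant depending on the first moment of $F_0$ (absent from \eqref{002} but harmless for the continuity claim). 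Two minor caveats: the sup norms of $w^{(1)}-w^{(2)}$ must be taken over the range of $\tau$ actually visited, which is $[0,T]$ only when $F_0$ is concentrated on $\{\tau=0\}$ (the intended case), and for general $\tau$-marginals one should either restrict $F_0$ accordingly or take the sup over the corresponding shifted interval; and your display for the $W_1$ bound should carry the first-moment factor only on the $w$-difference term, exactly as you wrote it. Neither affects the validity of the conclusion.
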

	\begin{proof}
		Denote by $\Phi^{(w,b)}$ the flow defined by equation~\eqref{Phi} with given $(w,b)$. Then, for any $(w,b)$ fulfilling (A1) and $\sigma$ fulfilling (A2) the assumptions of \cite[Proposition 7.2]{tosin} are satisfied and we obtain for $F_0 \in \mathcal{P}_1(\R^{d+1})$  
		\begin{align}\label{002}
		W_1( \Phi^{(w,b)}\#F_0, \Phi^{(\bar{w},\bar{b})}\#F_0 ) \leq \frac{ \exp(L t- 1) }L \| (w,b) - (\bar{w},\bar{b}) \|_{C^0(0,T)},
		\end{align}
		where $L=\max\{ L_{G(w,b)}, L_{G(\bar{w},\bar{b})}  \}$ is the maximum of the  Lipschitz constants of $G$ defined by equation~\eqref{001}.
	\end{proof}

		\begin{proposition}
			If a weak solution $F_t\in C([0,T]; \mathcal{P}_1(\R^{d+1}))$ of~\eqref{mf-mh} fulfills
			\begin{equation} \label{eq:factorF}
				\de F_t(x,\tau) = \de f_t(x) \delta(\tau-t)
			\end{equation}
			with $f_t \in C([0,T];\mathcal{P}_1(\R^d))$, and if $\de F_0(x,\tau)=\de f_0(x) \delta(\tau)$ with $f_0 \in \mathcal{P}_1(\R^d)$, then, under the assumptions (A1) and (A2), $f_t$ is a weak solution of the mean--field equation~\eqref{eq:meso} with initial condition $f_0$.
		\end{proposition}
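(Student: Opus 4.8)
The plan is to derive the weak formulation of~\eqref{eq:meso} directly from the weak formulation~\eqref{wmf-mh} of the augmented equation, by testing~\eqref{wmf-mh} against test functions of product form $\phi(x,\tau)=\varphi(x)\chi(\tau)$ and using the factorization~\eqref{eq:factorF} to integrate out the auxiliary variable $\tau$. Concretely, fix $\varphi\in C^\infty_0(\R^d)$ and a cut--off $\chi\in C^\infty_0(\R)$ with $\chi\equiv 1$ on a neighbourhood of $[0,T]$, say $\chi\equiv 1$ on $[-1,T+1]$; then $\phi(x,\tau):=\varphi(x)\chi(\tau)\in C^\infty_0(\R^{d+1})$ is an admissible test function in~\eqref{wmf-mh}.

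Next I would substitute $\de F_s(x,\tau)=\de f_s(x)\,\delta(\tau-s)$ and $\de F_0(x,\tau)=\de f_0(x)\,\delta(\tau)$ into~\eqref{wmf-mh}. Since each measure $F_s$ is concentrated at $\tau=s$, every integral over $\R^{d+1}$ collapses to an integral over $\R^d$ with $\tau$ replaced by $s$: the left--hand side becomes $\chi(t)\int_{\R^d}\varphi(x)\,\de f_t(x)$, the initial term becomes $\chi(0)\int_{\R^d}\varphi(x)\,\de f_0(x)$, and, writing out $\nabla_{(x,\tau)}\phi\cdot G = \chi(\tau)\,\nabla_x\varphi(x)\cdot\sigma\big(w(\tau)x+b(\tau)\big) + \varphi(x)\,\chi'(\tau)$ from~\eqref{001}, the double integral becomes $\int_0^t\!\int_{\R^d}\big[\chi(s)\,\nabla_x\varphi(x)\cdot\sigma\big(w(s)x+b(s)\big) + \varphi(x)\,\chi'(s)\big]\,\de f_s(x)\,\de s$. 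Because $0\le s\le t\le T$ throughout the integration, $\chi(s)=1$, $\chi'(s)=0$, and likewise $\chi(t)=\chi(0)=1$, so the identity reduces to exactly the weak formulation of~\eqref{eq:meso} tested against $\varphi$, for every $t\in[0,T]$. Together with the hypotheses $f_t\in C([0,T];\mathcal{P}_1(\R^d))$ and $f_0\in\mathcal{P}_1(\R^d)$, this shows that $f_t$ is a weak solution of~\eqref{eq:meso} with initial condition $f_0$.

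I do not expect a genuine obstacle here: the argument is essentially a change of test function adapted to the product structure of $F_t$. The only point deserving care is the rigorous interpretation of~\eqref{eq:factorF}, namely that $\de F_s(x,\tau)=\de f_s(x)\,\delta(\tau-s)$ is to be read as the statement that $F_s$ equals the product measure $f_s\otimes\delta_s$ on $\R^d\times\R$; granting this, the collapse $\int_{\R^{d+1}}\phi(x,\tau)\,\de F_s(x,\tau)=\int_{\R^d}\phi(x,s)\,\de f_s(x)$ is simply Fubini's theorem, and the cut--off $\chi$ is needed only to make $\phi$ compactly supported in the $\tau$--direction while leaving all relevant evaluations unaffected.
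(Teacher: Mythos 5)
Your proposal is correct and follows essentially the same route as the paper: substitute the factorized form of $F_t$ into the weak formulation~\eqref{wmf-mh} and integrate out $\tau$ to recover the weak form of~\eqref{eq:meso}. Your cut--off $\chi$ in the $\tau$--direction is a welcome extra precision (the paper tests directly against $\phi=\phi(x)$, which is not compactly supported in $\R^{d+1}$), and your $\sigma\big(w(s)x+b(s)\big)$ under the time integral is the correct form of what the paper writes, with a typo, as $\sigma\big(w(t)x+b(t)\big)$.
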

		\begin{proof}
			Using the assumptions on $F_t$ and $F_0$ in~\eqref{wmf-mh} we find for all $\phi=\phi(x) \in C^\infty_0(\R^d)$: 
			\begin{align*}
				\int_{\R^{d}} \phi(x) \mathrm{d}f_t(x) =& 	\int_{\R^{d}} \phi(x) \mathrm{d}f_0(x) \\
				& + \int_0^t \int_{\R^{d}} \nabla_x\phi(x) \cdot \sigma\big( w(t) x + b(t)) \mathrm{d}f_s(x) \mathrm{d}s,
			\end{align*}
			which is exactly the weak form of the mean--field equation~\eqref{eq:meso} with initial condition $f_0$.
		\end{proof}

	\subsection{Mean--field controllability problems} \label{ssec:controllability}
	
	In the continuous formulation of the neural network, the training step can be seen as controllability problem in the sense of the following definition, see also~\cite{Coron,Zuazua}.
	
	\begin{definition} \label{def:meso:controllab}
		Let $f_0,g\in\mathcal{P}_1(\R^d)$ be given. Let $T>0$ be fixed. We say that the mean--field equation~\eqref{eq:meso} is controllable if there exist $w \in C^{0,1}([0,T];\R^{d\times d})$ and $b \in C^{0,1}([0,T];\R^{d})$ such that $(\Phi_T\# f_0)=g$ where $\Phi_t:\R^d\to\R^d$ is the Lipschitz continuous characteristic flow of~\eqref{eq:meso}.
	\end{definition}

	
	In this section we focus on the controllability problem at the mean--field level. We show that for simple problems it is possible to recover explicit results on the solution of the controllability problem. However, so far,  a general theory is not available.
	
	\begin{proposition} \label{prop:controllab1}
		Let $T>0$, $\beta>0$ be fixed constants such that $\beta/T$ belongs to the image of $\sigma.$ Further, denote by  $B=(\beta,\dots,\beta)^t\in\R^d$ and  $f_0,g\in\mathcal{P}_1(\R^d)$ be given such that $\de g(x)=\de f_0(x-B)$.
		Then, the  mean--field equation~\eqref{eq:meso} is controllable in the sense of Definition~\ref{def:meso:controllab}.
	\end{proposition}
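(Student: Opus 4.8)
The plan is to prove controllability constructively by exhibiting an explicit pair of controls $(w,b)$ that are \emph{constant in time}, hence trivially in $C^{0,1}([0,T];\R^{d\times d})$ and $C^{0,1}([0,T];\R^{d})$, and for which the characteristic flow of~\eqref{eq:meso} is precisely the rigid translation $x\mapsto x+B$ at time $T$. Since by hypothesis $\beta/T$ belongs to the image of $\sigma$, I would first pick $c\in\R$ with $\sigma(c)=\beta/T$ and set $w(t)\equiv 0\in\R^{d\times d}$ and $b(t)\equiv(c,\dots,c)^t\in\R^d$ for every $t\in[0,T]$. These controls clearly satisfy (A1), and together with (A2) they fall under the hypotheses of Proposition~\ref{prop:wellposedness}, so the characteristic flow $\Phi_t\colon\R^d\to\R^d$ of~\eqref{eq:meso} is well defined and Lipschitz.

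Next I would integrate the characteristic ODE for this choice. Because $w\equiv0$, the velocity field $\sigma\big(w(t)x+b(t)\big)$ collapses to the constant vector $\sigma\big((c,\dots,c)^t\big)=(\beta/T,\dots,\beta/T)^t$ (recall $\sigma$ acts componentwise), so the characteristic equation reads $\dot x(t)=(\beta/T,\dots,\beta/T)^t$ with $x(0)=x_0$. This integrates immediately to $\Phi_t(x_0)=x_0+\tfrac{t}{T}\,B$, whence $\Phi_T(x_0)=x_0+B$ for all $x_0\in\R^d$; in particular $\Phi_T$ is the invertible translation $x\mapsto x+B$.

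Finally I would verify the push-forward identity directly from the definitions: for every Borel set $A\subset\R^d$, $(\Phi_T\#f_0)(A)=f_0\big(\Phi_T^{-1}(A)\big)=f_0(A-B)=\int_A \de f_0(x-B)=\int_A \de g(x)=g(A)$, where the penultimate equality is exactly the assumed relation $\de g(x)=\de f_0(x-B)$. Hence $\Phi_T\#f_0=g$, which is the controllability requirement of Definition~\ref{def:meso:controllab}, and the proof is complete. I do not expect any genuine obstacle here: the only content of the statement is that a pure translate of the data distribution is reachable, and a constant bias producing a uniform drift does the job; the argument manifestly does not generalize to arbitrary targets $g$, which is precisely why the hypothesis restricts $g$ to translates of $f_0$.
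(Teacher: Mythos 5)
Your proposal is correct and follows essentially the same route as the paper: set $w\equiv 0$ and choose a constant bias $b_0$ with $\sigma(b_0)=B/T$ (the paper phrases this as requiring $\int_0^T\sigma(b(t))\,\mathrm{d}t=B$ and noting a constant $b$ suffices), so that the characteristic flow at time $T$ is the translation by $B$ and the push-forward identity gives $\Phi_T\#f_0=g$. Your write-up merely spells out the integration of the characteristics and the push-forward computation more explicitly than the paper does.
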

	\begin{proof}
		According to Definition~\ref{def:meso:controllab} we only need  to show that there exist $w \in C^{0,1}([0,T];\R^{d\times d})$ and $b \in C^{0,1}([0,T];\R^{d})$ such that $g$ is the push--forward of $f_0$ under the flow of~\eqref{eq:meso}. Taking $w(t)\equiv 0$, the flow $\Phi_t:\R^{d}\to\R^{d}$  defined by
		\begin{equation*}
		\partial_t \Phi_t(x) = \sigma(b(t)), \; 
		\Phi_0(x) = x,
		\end{equation*}
		yields the desired result provided that $b$ fulfills $\int_0^T \sigma(b(t)) \mathrm{d}t=B$. There is at least one $b(t)=b_0$ such that the later equality holds.  
	\end{proof}
	
	Clearly, Proposition~\ref{prop:controllab1} does not guarantee uniqueness of the choice of the pair $(w,b)$. Consider, e.g., $d=1$ and the identity activation function $\sigma(x)=x$. Then for any $\beta>0$ the training process can be solved with $b(t)=t^2+1$. Namely, there exists a time $T$ at which $f_T=g$. However, for the same time $T$, the training process is also solved with $b(t)=\beta/T$.
	
	The next proposition is recalled  from \cite[Proposition 1]{HTV_MFResNet} and it characterizes  steady states of the mean--field equation that are not necessarily unique. Hence, we do not expect the controllability problem with infinite time horizon to solve explicitly. The proposition only illustrates that if initial and terminal states are a sum of weighted Dirac measures the system is trivially controllable with parameters $(\overline{w}, \overline{b})$ given below. However, as the proposition shows this only possible if the activation function as sufficiently many zeros.
	
	\begin{proposition}
		Let $f_t\in\mathcal{P}_1(\R^d)$ be the compactly supported weak solution of the mean--field equation~\eqref{eq:meso}. Assume that the activation function $\sigma:\R\to\R$ have $n$ zeros $z_i$, i.e.~$\sigma(z_i)=0$ for $i=1,\dots,n$. Let $\bar{b} = \lim_{t\to\infty} b(t) \in \R^d$ and $\bar{w}= \lim_{t\to\infty} w(t) \in \R^{d\times d}$ exist and be finite. Moreover, assume that $\bar{w}$ has maximum rank. Then,
		\begin{equation*} \label{eq:weakSS}
		f_{\infty} = \sum_{i=1}^{n^d} \rho_i \delta_{y_i}
		\end{equation*}
		is a steady state solution of~\eqref{eq:meso} in the sense of measures provided that $y$ is the solution to the system $\bar{w} y + \bar{b} = z$, where $z$ is any disposition with repetition of the $n$ zeros, and where $\rho_i\in[0,1]$, $\forall\,i=1,\dots,n$, with $\sum_{i=1}^{n^d} \rho_i=1$.
	\end{proposition}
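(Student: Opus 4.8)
The plan is to verify directly that $f_\infty$ solves the stationary weak form of~\eqref{eq:meso} with the limiting parameters $(\bar{w},\bar{b})$. First I would recall what it means for a measure $\mu\in\mathcal{P}_1(\R^d)$ to be a steady state of~\eqref{eq:meso} in the sense of measures: inserting a time--independent $f_t\equiv\mu$ into the weak formulation analogous to~\eqref{wmf-mh} (with $\tau$ replaced by its limiting value and hence the $\partial_\tau$ term dropped), the two boundary terms cancel and only the transport contribution survives, so that
$$\int_{\R^d}\nabla_x\phi(x)\cdot\sigma\big(\bar{w}x+\bar{b}\big)\,\de\mu(x)=0$$
must hold for every test function $\phi\in C^\infty_0(\R^d)$. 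This algebraic-looking identity is the condition to be checked for $\mu=f_\infty$.

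Next I would identify the support points $y_i$. Since $\sigma\colon\R\to\R$ acts componentwise in~\eqref{eq:meso}, the velocity field $x\mapsto\sigma(\bar{w}x+\bar{b})\in\R^d$ vanishes at a point $y$ if and only if every component of the vector $\bar{w}y+\bar{b}\in\R^d$ equals one of the zeros $z_1,\dots,z_n$ of $\sigma$, i.e.~if and only if $\bar{w}y+\bar{b}=z$ for some $z$ among the $n^d$ dispositions with repetition of $\{z_1,\dots,z_n\}$. Because $\bar{w}\in\R^{d\times d}$ is square and has maximum rank, it is invertible, so each such $z$ determines exactly one point $y=\bar{w}^{-1}(z-\bar{b})$; running over the $n^d$ admissible $z$ produces precisely the points $y_1,\dots,y_{n^d}$ in the statement, and no others annihilate the field.

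The verification is then immediate by linearity: for $f_\infty=\sum_{i=1}^{n^d}\rho_i\delta_{y_i}$ and any $\phi\in C^\infty_0(\R^d)$,
$$\int_{\R^d}\nabla_x\phi(x)\cdot\sigma\big(\bar{w}x+\bar{b}\big)\,\de f_\infty(x)=\sum_{i=1}^{n^d}\rho_i\,\nabla_x\phi(y_i)\cdot\sigma\big(\bar{w}y_i+\bar{b}\big)=0,$$
since $\sigma(\bar{w}y_i+\bar{b})=0$ by construction for each $i$. The conditions $\rho_i\in[0,1]$ and $\sum_i\rho_i=1$ are needed only to guarantee $f_\infty\in\mathcal{P}_1(\R^d)$ — the support is finite, so all moments are finite — and any such convex combination works because the equation is linear in $f$.

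The only genuinely delicate point is justifying that the long--time limit of the time--dependent parameters yields a meaningful limiting equation whose stationary measures are exactly those supported on the zero set of $x\mapsto\sigma(\bar{w}x+\bar{b})$: this is where the hypotheses enter, namely that $\bar{w},\bar{b}$ exist and are finite (so the limiting field inherits the Lipschitz and local boundedness properties of (A1)--(A2) and the weak stationary formulation makes sense on the compact support), and that $\bar{w}$ has maximum rank (so the zero set is the finite collection $\{y_i\}$ rather than a positive--dimensional affine subspace, on which one could place infinitely many other steady states). Everything else reduces to the elementary computation above.
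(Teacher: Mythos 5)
Your argument is correct. The paper itself does not prove this proposition --- it is recalled from \cite{HTV_MFResNet} without proof --- but your direct verification is the natural (and surely the intended) argument: a convex combination of Dirac masses placed at the zeros of the velocity field $x\mapsto\sigma(\bar{w}x+\bar{b})$ trivially annihilates the weak stationary form $\int\nabla_x\phi\cdot\sigma(\bar{w}x+\bar{b})\,\de\mu=0$, and you correctly locate where the maximum-rank hypothesis enters, namely in making each disposition $z$ correspond to exactly one point $y=\bar{w}^{-1}(z-\bar{b})$ so that the candidate steady states form the finite family $\{y_i\}_{i=1}^{n^d}$.
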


	The next lemma shows that for particular choices of $\sigma,w,b$ the mean of $f_t$ is preserved.
	\begin{lemma} \label{lem:mconserved}
		Let $f_0\in\mathcal{P}_1(\R^d)$ and let $f_t\in\mathcal{P}_1(\R^d)$ be a solution of the mean--field equation~\eqref{eq:meso} where $m(t) = \int_{\R^d} x \de f_t(x)$, $\forall t\in[0,T]$. Assume that $\sigma(x)=x$ and $b(t)=-w(t)m(t)$, $\forall t\in[0,T]$. Then, $m(t)=m(0)$, $\forall t\in[0,T]$.
	\end{lemma}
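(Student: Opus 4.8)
The plan is to differentiate $m(t)$ in time using the weak formulation of~\eqref{eq:meso}. Formally, inserting the (vector-valued) test function $\phi(x) = x$ into the weak form of~\eqref{eq:meso} and differentiating in $t$ gives
$$
\frac{\de}{\de t} m(t) = \int_{\R^d} \sigma\big(w(t) x + b(t)\big) \, \de f_t(x).
$$
Since $\sigma$ is the identity, the right-hand side is $\int_{\R^d} \big(w(t) x + b(t)\big) \, \de f_t(x) = w(t) m(t) + b(t)$, and substituting the hypothesis $b(t) = -w(t) m(t)$ makes it vanish identically. Hence $\dot m(t) \equiv 0$ on $[0,T]$, i.e.\ $m(t) = m(0)$.

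The only nontrivial point is that $\phi(x)=x$ is not compactly supported and so is not directly admissible in~\eqref{wmf-mh}. I would resolve this via the characteristic flow: for $\sigma=\mathrm{id}$ the velocity field $x\mapsto w(t)x+b(t)$ is globally Lipschitz with time-continuous coefficients, so by (the argument behind) Proposition~\ref{prop:wellposedness} we have $f_t = \Phi_t\#f_0$ with $\Phi_t$ solving the linear ODE $\partial_t\Phi_t(x)=w(t)\Phi_t(x)+b(t)$, $\Phi_0(x)=x$. Then $m(t)=\int_{\R^d}\Phi_t(x)\,\de f_0(x)$, and a Grönwall estimate yields $|\Phi_t(x)|\le C_T(1+|x|)$ and $|\partial_t\Phi_t(x)|\le C_T(1+|x|)$ uniformly for $t\in[0,T]$. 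Since $f_0\in\mathcal{P}_1(\R^d)$, this bound licenses differentiation under the integral sign, giving $\dot m(t)=\int_{\R^d}\big(w(t)\Phi_t(x)+b(t)\big)\,\de f_0(x)=w(t)m(t)+b(t)$, as needed. (Equivalently one can apply the reduced weak form to $\phi(x)=x_j\chi_R(x)$ with a cutoff $\chi_R\in C^\infty_0(\R^d)$, $\chi_R\equiv 1$ on $B_R(0)$, and pass to the limit $R\to\infty$ by dominated convergence, using that $\sup_{t\in[0,T]}\int_{\R^d}|x|\,\de f_t(x)<\infty$.)

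The main — and essentially only — obstacle is this justification of the first-moment identity for a test function of linear growth; once it is established, the conclusion follows at once from the algebraic cancellation built into the choice $b(t)=-w(t)m(t)$. I would therefore structure the proof as: (i) reduce to $f_t=\Phi_t\#f_0$ and record the linear-growth/Grönwall bounds; (ii) differentiate under the integral to obtain $\dot m(t)=w(t)m(t)+b(t)$; (iii) substitute the hypothesis and integrate.
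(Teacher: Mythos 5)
Your proposal is correct and follows essentially the same route as the paper: derive the first--moment evolution equation $\dot m(t)=w(t)m(t)+b(t)$ from~\eqref{eq:meso} and observe that the hypothesis $b(t)=-w(t)m(t)$ makes the right--hand side vanish. The paper states this moment identity formally in two lines without addressing the admissibility of the linear test function, so your additional justification via the characteristic flow and the Gr\"onwall bound is a welcome (but not route--changing) refinement.
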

	\begin{proof}
		Using equation~\eqref{eq:meso}  the first moment $m(t)$ of the probability density $f_f$ satisfies the evolution equation
		\begin{align*}
		\frac{\de}{\de t} m(t) = w(t) m(t) + b(t).
		\end{align*}
		Taking $b(t)=-w(t)m(t)$ the right--hand side vanishes and the assertion follows.
	\end{proof}
	
	This lemma can be used to obtain a further controllability result in Proposition~\ref{prop:controllab2} below. However, we restrict the case to $d=1$ since for $d>1$ one has to assume additional assumptions on the matrix $w(t)$ in order to write the solution formula of the flow explicitly.
	
	\begin{proposition} \label{prop:controllab2}
		Let $T>0$, $\alpha\in\R$ be fixed constants. Let $f_0,g\in\mathcal{P}_1(\R)$ be given such that $g=(F^{-1}\# f_0)e^\alpha$ where $F:x\in\R\mapsto F(x)=x e^\alpha + (1-e^\alpha)m_0\in\R$ and $m_0 = \int_{\R} x \de f_0(x)$. Assume that $\sigma(x)=x$.
		Then the mean--field equation~\eqref{eq:meso} is controllable in the sense of Definition~\ref{def:meso:controllab}.
	\end{proposition}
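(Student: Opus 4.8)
The plan is to follow the strategy of Proposition~\ref{prop:controllab1}: we exhibit an explicit admissible pair $(w,b)$ whose characteristic flow $\Phi_T$ equals the affine map $F$, so that $f_T=\Phi_T\#f_0=F\#f_0=g$. The point is that for $d=1$ and $\sigma=\mathrm{id}$ the flow of~\eqref{eq:meso} can be integrated in closed form as soon as Lemma~\ref{lem:mconserved} is used to fix the bias in terms of the (conserved) mean.

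Concretely, I would take $w(t)\equiv\alpha/T$ and $b(t)\equiv-(\alpha/T)\,m_0$ on $[0,T]$ (more generally, any $w\in C^{0,1}$ with $\int_0^T w(s)\,\de s=\alpha$ and $b=-w\,m_0$). Being constant, these functions lie in $C^{0,1}([0,T];\R)$ and are therefore admissible controls in the sense of Definition~\ref{def:meso:controllab}. Since $b(t)=-w(t)m_0$ and $m(0)=m_0$, the computation in the proof of Lemma~\ref{lem:mconserved} gives $\frac{\de}{\de t}m(t)=w(t)m(t)+b(t)=w(t)\big(m(t)-m_0\big)$ with $m(0)=m_0$, whence $m(t)\equiv m_0$; in particular $b(t)=-w(t)m(t)$ holds along the solution, so the hypotheses of Lemma~\ref{lem:mconserved} are genuinely satisfied and the argument is not circular.

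The characteristic equation associated with~\eqref{eq:meso} then reads $\partial_t\Phi_t(x)=\sigma\big(w(t)\Phi_t(x)+b(t)\big)=w(t)\big(\Phi_t(x)-m_0\big)$, $\Phi_0(x)=x$. Substituting $u(t):=\Phi_t(x)-m_0$ turns this into the scalar linear ODE $\dot u=w(t)u$ with $u(0)=x-m_0$, so $u(t)=(x-m_0)\exp\!\big(\int_0^t w(s)\,\de s\big)$ and hence $\Phi_t(x)=m_0+(x-m_0)\exp\!\big(\int_0^t w(s)\,\de s\big)$. Evaluating at $t=T$ and using $\int_0^T w(s)\,\de s=\alpha$ yields $\Phi_T(x)=m_0+(x-m_0)e^\alpha=e^\alpha x+(1-e^\alpha)m_0=F(x)$. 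Therefore $f_T=\Phi_T\#f_0=F\#f_0$, and a change of variables identifies this with the prescribed measure $g$ (its density at $x$ being $\frac{\de f_0}{\de x}(F^{-1}(x))\,|(F^{-1})'(x)|$, with $(F^{-1})'\equiv e^{-\alpha}$). By Definition~\ref{def:meso:controllab} this proves controllability.

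The step I expect to require the most care is making sure that the \emph{a priori} unspecified bias $b(t)=-w(t)m(t)$ occurring in Lemma~\ref{lem:mconserved} is indeed realized by the explicit constant choice $b\equiv-(\alpha/T)m_0$ — that is, verifying that the mean is genuinely conserved so the argument is not circular — together with the routine push-forward/Jacobian bookkeeping needed to match $F\#f_0$ with $g$. Beyond that, everything reduces to integrating a one-dimensional linear ordinary differential equation.
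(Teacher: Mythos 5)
Your overall strategy is exactly the paper's: conserve the mean by setting $b(t)=-w(t)m_0$ (so that Lemma~\ref{lem:mconserved} applies non-circularly, as you correctly argue), integrate the resulting linear characteristic ODE to obtain the affine flow $\Phi_t(x)=m_0+(x-m_0)\exp\left(\int_0^t w(s)\,\de s\right)$, and choose $\int_0^T w$ so that $\Phi_T$ realizes the prescribed affine map. The gap is the sign of that integral, and it is not cosmetic. You take $\int_0^T w(s)\,\de s=\alpha$, so $\Phi_T(x)=m_0+(x-m_0)e^\alpha=F(x)$ and $f_T=F\#f_0$, whose density at $x$ is $\rho_0(F^{-1}(x))e^{-\alpha}$ (writing $\rho_0$ for the density of $f_0$). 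But the target measure, as the paper's own proof makes explicit in its final display, is the one with density $\rho_0(F(x))\,e^{\alpha}$ --- that is, the genuine push-forward $F^{-1}\#f_0$; the trailing factor $e^\alpha$ in the statement is the Jacobian $|F'|$ written out for the density, since $(F^{-1}\#f_0)\cdot e^\alpha$ read literally would not even be a probability measure. These two candidates differ whenever $\alpha\neq0$: $F\#f_0$ dilates about $m_0$ by $e^\alpha$, while $F^{-1}\#f_0$ contracts by $e^{-\alpha}$. So your concluding step, ``a change of variables identifies $F\#f_0$ with $g$,'' asserts an equality that is false under the intended (and only normalization-consistent) reading of $g$. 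Your convention also contradicts the remark immediately following the proposition, which states that the second moment of $f_t$ \emph{decreases} for $\alpha>0$.

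The fix is one line: choose $w$ with $\int_0^T w(s)\,\de s=-\alpha$ (e.g.\ $w\equiv-\alpha/T$ and $b\equiv(\alpha/T)\,m_0$), so that $\Phi_T(x)=m_0+(x-m_0)e^{-\alpha}=F^{-1}(x)$ and hence $f_T=F^{-1}\#f_0=g$. With that correction your argument coincides with the paper's proof --- and is in fact slightly cleaner, since your substitution $u=\Phi_t-m_0$ avoids the variation-of-constants formula in which the paper drops a factor $w(r)$ inside the integral.
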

	\begin{proof}
		Observe that with the definition of $g$ we have
		$$
		\int_{\R} x \de g(x) = \int_{\R} e^\alpha x \de (F^{-1}\#f_0)(x) = \int_{\R} e^\alpha \left( x \circ F^{-1} \right)(x) |\text{det}J_{F^{-1}}| \de f_0(x) = m_0.
		$$
		In order to have $m(t)=m_0$ for all times we set $b(t) = -w(t) m_0$ and $\sigma(x)=x$. Further, we choose $w \in C^{0,1}([0,T];\R^{d\times d})$ such that $\int_0^T w(t) \de t= -\alpha$. The flow $\Phi_t:\R\to\R$ is defined by
		\begin{equation*}
		\partial_t \Phi_t(x) = w(t) \left( \Phi_t(x) - m_0 \right), \; 		\Phi_0(x) = x,
		\end{equation*}
		which yields $\Phi_t(x)= e^{\int_0^t w(s) \de s} x - m_0 \int_0^t e^{\int_r^t w(s) \de s} \de r$ and $\Phi_T^{-1}(y)=e^\alpha \left( y + m_0 
		\int_0^T e^{\int_r^T w(s) \de s} \de r \right).$ Hence,   
		$$
		\de f_T(x) = \de (\Phi_T\# f_0)(x) = \de f_0\left(x e^{-\int_0^T w(t) \de t} + (1-e^{-\int_0^T w(t) \de t})m_0\right)e^{\alpha}.
		$$
		and the system is controllable, i.e.~$f_T=g$.
	\end{proof}
	
	Proposition~\ref{prop:controllab2} shows that the sign of $\alpha$ influences the behavior of the second moment $\int x^2 \de f_t(x)$ of $f_t$. In particular, for $\alpha>0$ it is possible to show that the second moment of $f_t$ decreases and $f_t$ concentrates at the first moment, whereas for $\alpha<0$ the second moment increases.

	Proposition~\ref{prop:controllab1} and Proposition~\ref{prop:controllab2} of this section discuss some prototype situations in which the problem of recovering the target distribution $g$ through the mean--field equation~\eqref{eq:meso} with initial condition $f_0$ is explicitly possible. In general,  we follow an alternative method introduced in the subsequent sections to compute the  parameters.
	
	\subsection{Existence of solutions to the mean--field minimization problem} \label{ssec:existenceMin}

	As presented in Section~\ref{sec:preliminary} the training procedure of a neural network aims to find optimal weights and bias in order to minimize a given distance, e.g.~\eqref{eq:lossMicro}. The mean--field interpretation of this training procedure requires first to derive the mean--field limit of the loss function which is given by the following proposition.
	\begin{proposition}
		Let $\{ x_i(t),\tau_i(t) \}_{i=1}^M$ be the trajectories given by the dynamical system~\eqref{003}-\eqref{001} with initial conditions $\{ x_i^0,0 \}_{i=1}^M$ and let $\{y_i\}_{i=1}^M$ be the given target values. Let $F_0^M\in\mathcal{P}_1(\R^{d+1})$ and $g^M\in\mathcal{P}_1(\R^{d})$ be the empirical measures associated to the initial conditions and the target values, respectively. Furthermore, let $F_0\in\mathcal{P}_1(\R^{d+1})$ and $g\in\mathcal{P}_1(\R^{d})$ be such that $W_1(F_0,F_0^M) \to 0$, $W_1(g,g^M) \to 0$, as $M\to\infty$. Then, under the assumptions (A1) and (A2), the mean--field limit of the loss function~\eqref{eq:lossMicro} is
		$$
			\int_{(x,\tau)\in\R^{d+1}} \tilde{\ell}(x) \de F_T(x,\tau)
		$$
		where $\tilde{\ell}(x) = \int_{\R^{d}} \ell(x-y) \de g(y)$ and $F_t$ is the weak solution of~\eqref{mf-mh} obtained with initial condition $F_0\in\mathcal{P}_1(\R^{d+1})$.
	\end{proposition}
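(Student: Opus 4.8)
The plan is to lift the feature dynamics to the augmented space $\R^{2d+1}$ carrying also the target coordinate $y$, propagate the empirical measure of the data there by the (extended) flow, and then pass to the mean--field limit using the stability estimates already in hand. On $\R^{2d+1}$ with coordinates $(x,\tau,y)$ consider the vector field $\hat G(x,\tau,y):=(G(x,\tau),0)^\top=(\sigma(w(\tau)x+b(\tau)),1,0)^\top$, which under (A1)--(A2) is again Lipschitz and uniformly bounded; by \cite[Lemma 6.1]{tosin} it generates a unique Lipschitz flow $\hat\Phi_t$, and since the $y$--component is stationary $\hat\Phi_t=\Phi_t\times\mathrm{id}_{\R^d}$, with $\hat\Phi_t\#\hat\mu$ the unique weak solution of $\partial_t\hat\mu+\nabla_{(x,\tau)}\cdot(G\hat\mu)=0$ and the associated Dobrushin estimate in $W_1$ holding on $\R^{2d+1}$.

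Next I would introduce the diagonal empirical measure of the data, $\hat F_0^M:=\frac1M\sum_{i=1}^M\delta_{(x_i^0,0,y_i)}\in\mathcal{P}_1(\R^{2d+1})$, whose $(x,\tau)$--marginal is $F_0^M$ and whose $y$--marginal is $g^M$. Its push--forward is $\hat\Phi_T\#\hat F_0^M=\frac1M\sum_{i=1}^M\delta_{(x_i(T),\tau_i(T),y_i)}$, so that the microscopic loss~\eqref{eq:lossMicro} equals $\int_{\R^{2d+1}}\ell(x-y)\,\de(\hat\Phi_T\#\hat F_0^M)(x,\tau,y)$. If $\hat F_0^M$ converges in $W_1$ to some $\hat F_0\in\mathcal{P}_1(\R^{2d+1})$, then, $\hat\Phi_T$ being Lipschitz, $\hat\Phi_T\#\hat F_0^M\to\hat\Phi_T\#\hat F_0$ in $W_1$; since $(x,\tau,y)\mapsto\ell(x-y)$ can be taken with at most linear growth (either assuming $\ell$ Lipschitz, as e.g.~for the mean absolute error, or reducing to compactly supported measures as in the Remark above), the loss converges to $\int_{\R^{2d+1}}\ell(x-y)\,\de(\hat\Phi_T\#\hat F_0)(x,\tau,y)$. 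Finally, because $\hat\Phi_T=\Phi_T\times\mathrm{id}$, whenever $\hat F_0=F_0\otimes g$ one gets $\hat\Phi_T\#\hat F_0=(\Phi_T\#F_0)\otimes g=F_T\otimes g$, and Fubini yields $\int\ell(x-y)\,\de(F_T\otimes g)=\int_{\R^{d+1}}\big(\int_{\R^d}\ell(x-y)\,\de g(y)\big)\de F_T(x,\tau)=\int_{\R^{d+1}}\tilde{\ell}(x)\,\de F_T(x,\tau)$, which is the asserted identity.

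The main obstacle is exactly this last identification. Marginal convergence $W_1(F_0^M,F_0)\to0$ and $W_1(g^M,g)\to0$ does \emph{not} by itself force the diagonal measure $\hat F_0^M$ to converge to the product $F_0\otimes g$: one additionally needs the initial features and the targets to be asymptotically independent, i.e.~$W_1(\hat F_0^M,F_0\otimes g)\to0$. I would therefore either add this hypothesis explicitly, or, keeping only the stated marginal assumptions, record the mean--field loss in the more general form $\int_{\R^{2d+1}}\ell(x-y)\,\de\hat F_T(x,\tau,y)$ with $\hat F_T=\hat\Phi_T\#\hat F_0$ and $\hat F_0$ the $W_1$--limit of the joint data measure, which collapses to $\int\tilde{\ell}\,\de F_T$ precisely when that limit factorizes. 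The remaining ingredients — existence and Lipschitz regularity of $\hat\Phi_t$, $W_1$--continuity of the push--forward, and the Fubini step — are routine consequences of Proposition~\ref{prop:wellposedness} and \cite[Section 6]{tosin}.
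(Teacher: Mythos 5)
Your approach is essentially the paper's own: both arguments track the joint data--target measure, propagate it in time, and reduce the limiting loss to $\int_{\R^{d+1}}\tilde{\ell}(x)\,\de F_T(x,\tau)$ by factorizing the joint limit into $F_T\otimes g$. Your lifted flow $\hat\Phi_t=\Phi_t\times\mathrm{id}_{\R^d}$ on $\R^{2d+1}$ is a slightly cleaner packaging of what the paper does directly with the empirical measure $\mu^M_t$ of~\eqref{eq:jointEmpirical}, whose push-forward structure is implicit there.

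The obstacle you single out is genuine, and it is precisely the step the paper elides. The paper asserts $\de\mu^M_T(x,y,\tau)=\de F^M_T(x,\tau)\,\de g^M(y)$ ``since predictions and target values are statistically independent,'' but the diagonal empirical measure $\frac1M\sum_{i}\delta(x-x_i(T))\delta(y-y_i)\delta(\tau-T)$ is \emph{not} the product of its marginals for finite $M$ (already for $M=2$ with two distinct pairs $(x_1,y_1)$, $(x_2,y_2)$ the product charges the off-diagonal pairs $(x_1(T),y_2)$ and $(x_2(T),y_1)$, which the diagonal measure does not), and the subsequent triangle inequality $W_1(\mu^M_T,F_T g)\le W_1(F^M_T g^M,F^M_T g)+W_1(F^M_T g,F_T g)$ rests on exactly that identity. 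Hence the stated marginal hypotheses $W_1(F_0^M,F_0)\to0$ and $W_1(g^M,g)\to0$ do not by themselves yield the claimed limit; one needs the joint empirical measure to converge in $W_1$ to the product $F_0\otimes g$ (asymptotic independence, or chaoticity, of features and targets), which is the extra hypothesis you propose. With that hypothesis added --- or with your fallback of recording the limit as $\int_{\R^{2d+1}}\ell(x-y)\,\de\hat F_T(x,\tau,y)$ for the joint limit $\hat F_T=\hat\Phi_T\#\hat F_0$, which collapses to $\int\tilde{\ell}\,\de F_T$ exactly when $\hat F_0$ factorizes --- your argument closes correctly: the remaining ingredients (Lipschitz regularity of the lifted flow, $W_1$-continuity of the push-forward, integrability of $\ell(x-y)$ under a Lipschitz or compact-support assumption, and the Fubini step) are all sound and consistent with Proposition~\ref{prop:wellposedness} and the Dobrushin estimate used in the paper.
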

	\begin{proof}
		We notice that the loss function~\eqref{eq:lossMicro} can be written as
		$$
		\frac{1}{M} \sum_{i=1}^M \ell(x_i(T)-y_i) = \int_{\R^{2d+1}} \ell(x-y) \de \mu^M_T(x,y,\tau)
		$$
		where $\mu^M_t\in\mathcal{P}_1(\R^{2d+1})$ is the time dependent empirical measure
		\begin{equation} \label{eq:jointEmpirical}
		\de\mu^M_t(x,y,\tau) = \frac{1}{M} \sum_{i=1}^M \delta(x-x_i(t)) \delta(y-y_i) \delta(\tau-t) 
		\end{equation}
		We observe that $\mu^M_T$ has marginals
		\begin{align*}
		\int_{(x,\tau)\in\R^{d+1}} \de\mu^M_T(x,y,\tau) &= \frac{1}{M} \sum_{i=1}^M \delta(y-y_i) = \de g^M(y), \\
		\int_{y\in\R^d} \de \mu^M_T(x,y,\tau) &= \frac{1}{M} \sum_{i=1}^M \delta(x-x_i(T)) \delta(\tau-t) = \de F^M_T(x,\tau).
		\end{align*}
		In particular, since predictions and target values are statistically independent, we have
		$$
		\de \mu^M_T(x,y,\tau) = \de F^M_T(x,\tau) \de g^M(y).
		$$
		By the Glivenko--Cantelli's theorem, see e.g.~\cite{FournierGuillin,Boissard}, we have that there exists $g\in\mathcal{P}(\R^d)$ and a time dependent measure $\mu_t\in\mathcal{P}(\R^{2d+1})$ such that $W_1(g^M,g) \to 0$ and $W_1(\mu^M_t,\mu_t) \to 0$, as $M\to\infty$. In addition, under the assumptions (A1) and (A2), the mean--field convergence result of Section~\ref{ssec:wellposed} implies $W_1(F^M_t,F_t) \to 0$, $\forall t\in[0,T]$ as $M\to\infty$, with $F_t\in\mathcal{P}_1(\R^{d+1})$ weak solution of~\eqref{mf-mh} obtained with initial condition $F_0\in\mathcal{P}_1(\R^{d+1})$ such that $W_1(F_0,F_0^M) \to 0$, as $M\to\infty$. Then, noticing that
		$$
		W_1(\mu^M_T,F_T g) \leq W_1(F^M_T g^M,F^M_T g) + W_1(F^M_T g,F_T g) \to 0 \ \text{ as } M\to\infty,
		$$
		we conclude that the mean--field limit of the loss function~\eqref{eq:lossMicro} is
		\begin{equation} \label{eq:lossMF}
		\int_{(x,y,\tau)\in\R^{2d+1}} \ell(x-y) \de \mu_T(x,y,\tau) = \int_{(x,\tau)\in\R^{d+1}} \tilde{\ell}(x) \de F_T(x,\tau), \quad \tilde{\ell}(x) = \int_{\R^{d}} \ell(x-y) \de g(y).
		\end{equation}
	\end{proof}
	
	
	
	Consider now the cost functional $J:\mathcal{P}_1(\R^{d+1}) \to \R$ given by 
	\begin{align}\label{cost-mh}
		J(\mu) = \int_{\R^{d+1}} \tilde{\ell}(x) \mathrm{d}\mu(x,\tau).
	\end{align}
	In the following we discuss the existence of solutions to the mean--field minimization problem
	$$(w,b) \mapsto \min J(F_T)$$
	on a suitable subset $X$ of controls $(w,b)$, where $F_T$ is the unique weak solution to equation \eqref{mf-mh} for fixed initial datum $F_0 \in \mathcal{P}_1(\R^{d+1})$. We observe that the mean--field cost functional $\tilde{\ell}$ derived in~\eqref{eq:lossMF} is bounded and Lipschitz continuous provided that $\ell \in C^{0,1}(\R^d)$ is bounded from below. In fact:
	$$
		\| \tilde{\ell}(x) - \tilde{\ell}(z) \| \leq \int_{\R^{d}} \| \ell(x-y) - \ell(z-y) \| \de g(y) \leq L \int_{\R^{d}} \| x-z \| \de g(y) = L \| x-z \|, \quad \forall x,z\in\R^d,
	$$
	with $L$ Lipschitz constant of $\ell$.
	In order to simplify the notation we denote by
	\begin{align*} u&:=(w,b) \in C^{0,1}([0,T];\R^{d\times d}\times\R^d) , \\ \mu_u&:=F_T \in \mathcal{P}_1(\R^{d+1}). 
	\end{align*}
	For fixed $T>0$ and $F_0\in \mathcal{P}_1(\R^{d+1})$ the reduced cost functional 
	is then defined by 
	\begin{align*}
	j(u)=J(\mu_u)=\int_{\R^{d+1}} \tilde{\ell}(x) \mathrm{d}\mu_u(x,\tau).
	\end{align*}
	Since $\tilde{\ell}$ is bounded from below, we obtain that $j$ is bounded from below. Since $\tilde{\ell}$ is Lipschitz with constant $L$ we obtain the following estimate for $u,v \in C^{0,1}([0,T];\R^{d\times d}\times\R^d)$ constrained to $u(0)=v(0)=0$:
	\begin{align*}
	|j(u)-j(v)| &= L \left\| \int_{\R^{d+1}} \frac{\tilde{\ell}(x)}{L}  \mathrm{d}(\mu_u-\mu_v)  \right\|	 \\ & 
	\leq L  \sup\left\{  \int_{\R^{d+1}} \phi(x)  \mathrm{d}(\mu_u-\mu_v) : \phi \mbox{ 1--Lipschitz} \right\} \\  &= L W_1(\mu_u, \mu_v) 
	\leq C(L_u,L_v) \| u - v \|_{C^0}, 
	\end{align*}
	where the last inequality follows by \eqref{002} and the constant $C$ depends on the Lipschitz constants of $u$ and $v.$ Thus, the loss function $j$ is continuous with respect to the $C^0-$norm.
	The previous results can be used to establish existence of minimizers using the direct method of variation.
	\begin{proposition}
		Assume that the assumptions (A1) and (A2) are fulfilled. Let $\tilde{\ell} \in C^{0,1}(\R^{d})$ 
		and bounded from below. Assume $T>0, L>0$ and $F_0\in \mathcal{P}_1(\R^{d+1})$ are given.
		Then, there exists a solution to the  minimization problem 
		\begin{align*} \min\limits_{ (w,b) \in X } \int_{\R^{d+1}} \tilde{\ell}(x) \mathrm{d}F_T(x,\tau),  
		\end{align*}		  
		where $F_t\in C([0,T]; \mathcal{P}_1(\R^{d+1}))$ is the weak solution to equation \eqref{mf-mh} and where
		\begin{equation} \label{eq:X}
		X=\{ (w,b) \in C^{0,1}([0,T];\R^{d\times d}\times\R^d): L_w + L_b \leq L, \; w(0)=b(0)=0 \}
		\end{equation}
		with $L_w,L_b$ Lipschitz constants of $w,b$, respectively.
	\end{proposition}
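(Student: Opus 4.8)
The plan is to apply the direct method of the calculus of variations on the admissible set $X$ in~\eqref{eq:X}, using that the Lipschitz bound $L_w + L_b \le L$ makes $X$ sequentially compact for uniform convergence, while the continuity estimate established just above the statement makes the reduced cost $j$ continuous on $X$.

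First I would note that the infimum is finite: since $F_T$ is a probability measure and $\tilde{\ell}$ is bounded from below, $j(u) = \int_{\R^{d+1}} \tilde{\ell}(x)\,\de F_T(x,\tau) \ge \inf_{\R^d}\tilde{\ell} > -\infty$ for every $u=(w,b)\in X$. Set $m := \inf_{u\in X} j(u)$ and pick a minimizing sequence $u_n = (w_n,b_n) \in X$ with $j(u_n)\to m$. Because each $u_n$ satisfies $L_{w_n}+L_{b_n}\le L$ and $u_n(0)=0$, we get $\|u_n(t)\| = \|u_n(t)-u_n(0)\| \le Lt \le LT$ on $[0,T]$, so $(u_n)$ is uniformly bounded and equi-Lipschitz, hence equicontinuous. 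By the Arzel\`a--Ascoli theorem there is a subsequence, not relabeled, converging in $C^0([0,T];\R^{d\times d}\times\R^d)$ to some $u^\star = (w^\star,b^\star)$. The limit satisfies $u^\star(0)=\lim_n u_n(0)=0$, and by lower semicontinuity of the Lipschitz seminorm under pointwise convergence together with the superadditivity of $\liminf$, $L_{w^\star}+L_{b^\star} \le \liminf_n L_{w_n} + \liminf_n L_{b_n} \le \liminf_n(L_{w_n}+L_{b_n}) \le L$. Hence $u^\star\in X$; in particular $u^\star$ obeys (A1), so Proposition~\ref{prop:wellposedness} guarantees that $\mu_{u^\star}=F^\star_T$ is a well-defined element of $\mathcal{P}_1(\R^{d+1})$ and that the constraint~\eqref{mf-mh} is meaningful at $u^\star$.

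It then remains to pass to the limit in the cost. I would invoke the estimate derived just before the proposition, $|j(u)-j(v)| \le C(L_u,L_v)\,\|u-v\|_{C^0}$ for controls vanishing at $t=0$, whose constant comes from~\eqref{002} and thus depends only on the Lipschitz constants and uniform bounds of the field $G$ in~\eqref{001} along $u$ and $v$. Since $w_n,b_n,w^\star,b^\star$ are all bounded by $LT$ on $[0,T]$ with Lipschitz constants at most $L$, and $\sigma$ is globally Lipschitz and bounded by (A1)--(A2), the Lipschitz constant of $G$ --- and hence $C$ --- is bounded by a single constant independent of $n$. Consequently $|j(u_n)-j(u^\star)| \to 0$, so $j(u^\star)=\lim_n j(u_n)=m$, and $u^\star$ is a minimizer.

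The hard part is not really the variational argument, which is standard once the right function space is chosen, but making sure the continuity constant $C$ in~\eqref{002} is \emph{uniform} along the minimizing sequence; this is exactly what the Lipschitz constraint $L_w+L_b\le L$ (combined with the boundedness of $\sigma$ from (A2)) buys us, by keeping the Lipschitz constant of $G$ uniformly controlled. The closedness of $X$ under uniform limits --- passing the sum-of-Lipschitz-constants bound and the condition $w(0)=b(0)=0$ to the limit --- is the other point to check, but it follows from lower semicontinuity of the Lipschitz seminorm.
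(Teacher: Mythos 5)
Your proof is correct and follows essentially the same route as the paper's: the direct method of variations, with Arzel\`a--Ascoli applied to a minimizing sequence in $X$ (using the uniform bound and equi-Lipschitz property coming from $L_w+L_b\le L$ and $u(0)=0$), closedness of $X$ under uniform limits, and the previously established $C^0$-continuity of the reduced cost $j$ to pass to the limit. Your additional remarks on the lower semicontinuity of the Lipschitz seminorm and on the uniformity of the constant in~\eqref{002} along the sequence only make explicit details the paper leaves implicit.
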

	\begin{proof}
		For the proof of the previous proposition we proceed as follows. Since the cost functional is bounded from below, there exists a minimizing sequence $(u_n)_{n\geq0} \subset X.$ According to the definition of $X$ we have that $\|u_n\|_{C^0} \leq L$ for all $n.$ Further, we have that $u_n$ is uniformly Lipschitz continuous due to definition of $X.$ Hence, the assertion of the Arzela--Ascoli are fulfilled and $u_n$ converges in $C^0$ to $u \in C^0([0,T];\R^{d\times d}\times\R^d).$ Furthermore, it holds that $u \in C^{0,1}([0,T];\R^{d\times d}\times\R^d)$ with Lipschitz constant bounded by $L.$ Finally, the continuity of $j$ shown above yields that $u$ is the minimizer, i.e., $j(u)=\lim\limits_{n\to \infty} j(u_n).$ This finishes the proof.
	\end{proof}

	\section{Computational approach to the mean--field training procedure} \label{sec:training}
	
	In this section we explicitly formulate the training processes for the mean--field limit~\eqref{eq:meso} of the neural differential equation~\eqref{eq:micro} in terms of an optimal control problem. While Section~\ref{ssec:existenceMin} discusses existence of minimizers to the mean--field control problem, here, we formally derive a first--order optimality system in order to design a numerical method for the optimization of the parameters $w(t)$ and $b(t)$ of the neural network. Note that the previous theorem does not allow for a characterization due to a lack of regularity of the solution in terms of the parameters.

	\subsection{Formulation of the computational approach}
	
	Our computational approach is based on the minimization problem of a general functional of equation \eqref{cost-mh} for $(w,b) \in X$, cf.~\eqref{eq:X}: 
	\begin{equation*}\label{mh-01}
	\min\limits_{ (w,b) \in X } \int_{\R^{d+1}} \tilde{\ell}(x) \de F_T(x,\tau) + \frac{\gamma}2 \int_0^T \| w \|^2 + \|b\|^2 \de t,
	\end{equation*}
	where $F_T$  is the solution at some time $t=T$  obtained as (weak) solution of~\eqref{mf-mh} endowed with the initial condition $F_0$.
	We assume the initial and solution are of the type~\eqref{eq:factorF} $\forall (t,x,\tau)$. 
	Furthermore, we use a Tikhonov regularization for the controls with a parameter $\gamma>0$ but the proposed computational approach below works also in the case $\gamma=0$. Under the structural assumption~\eqref{eq:factorF} the previous problem reduces to a constrained optimal control problem 
	\begin{align*}
	&\min\limits_{ (w,b) \in X } \int_{\R^{d}} \tilde{\ell}(x) \de f_T(x) + \frac{\gamma}2 \int_0^T \| w \|^2 + \|b\|^2 \de t  \\
	& \mbox{ subject to } \ 
	\begin{cases}
	\partial_t f_t(x) + \nabla_x \cdot \Big( \sigma\big( 
	w(t) x+ b(t) 
	\big) f_t(x) \Big) =0, \\
	f_{t=0}(x)=f_0(x).
	\end{cases}
	\end{align*}
	Only formally, a first--order optimality system in strong form is derived
	\begin{subequations}
	\begin{align}
	\partial_t f_t(x) + \nabla_x \cdot \Big( \sigma\big( 
	w(t) x+ b(t) 
	\big) f_t(x) \Big) =0, &\quad f_{t=0}(x)=f_0(x), \\
	\partial_t \Lambda_t(x) + \nabla_x \Lambda_t(x) \cdot \sigma\big( 
	w(t) x+ b(t) 
	\big) = 0, &\quad \Lambda_{t=T}(x) = \tilde{\ell}(x), \label{mh-adj} \\
	\gamma b_j(t) + \int_{\R^d} \partial_{x_j}\Lambda_t(x) \sigma'_j\big( 
	w(t) x+ b(t) 
	\big) f_t(x) \de x =0, & \quad j=1,\dots,d,  \\
	\gamma w_{j,k}(t) + \int_{\R^d} \partial_{x_j} \Lambda_t(x) \sigma'_j\big( 
	w(t) x+ b(t) 
	\big) x_k f_t(x) \de x =0, & \quad j,k=1,\dots,d, 
	\end{align}
	\end{subequations}
	where $b_j$ represents the $j$--th component of the bias vector $b$, $w_{j,k}$ is the entry $(j,k)$ of the weight matrix $w$ and, finally, $\sigma'_j$ represents the derivative of the activation function $\sigma$ computed on the $j$--th component of its argument.
	Note that the constraint $w(0)=0 \in \mathbb{R}^{d\times d}$ and $b(0)=0\in\mathbb{R}^d$ are enforced
	in the numerical method. We further ormally differentiate the equation for $\Lambda_t$ with respect to $x_j$ for $j=1,\dots,d.$ This also yields a conservative formulation for each $\partial_{x_j} \Lambda_t.$ Furthermore, we transform time $t\mapsto T-t$ in equation \eqref{mh-adj} in order to obtain an initial value problem. Since we implement numerical results in the case $d=1$ we state the resulting system where $\lambda_t(x)=\partial_x \Lambda_{T-t}(x)$
	\begin{subequations}
	\begin{align}
	\partial_t f_t(x) + \partial_x \Big( \sigma\big( 
	w(t) x+ b(t) 
	\big) f_t(x) \Big) =0, &\quad f_{t=0}(x)=f_0(x), \label{mh-0001} \\
	\partial_t \lambda_t(x) - \partial_x \Big( \sigma\big( 
	w(T-t) x+ b(T-t) 
	\big) \lambda_t(x) \Big) = 0, &\quad \lambda_{t=0}(x) = \partial_x \tilde{\ell}(x), \label{mh-0002} \\
	\gamma b(t) + \int_{\R} \lambda_{T-t}(x) \sigma'\big( 
	w(t) x+ b(t) 
	\big) f_t(x) \de x =0, &\; \label{mh-0003}  \\
	\gamma w(t) + \int_{\R}  \lambda_{T-t}(x) \sigma'\big( 
	w(t) x+ b(t) 
	\big) x f_t(x) \de x =0. & \label{mh-0004}
	\end{align}
	\end{subequations}
	Observe that~\eqref{mh-0001} and~\eqref{mh-0002} are decoupled due to the definition of the loss function, namely the initial state of~\eqref{mh-0002} does not depend on the final state of~\eqref{mh-0001}. This is due to the fact that the mean--field loss function is linear in the state. 

	\subsubsection{Numerical discretization scheme}

	The optimality system is solved in a block Gauss--Seidel fashion, i.e., at the $k$--th iteration $t\mapsto (w,b)^k(t)$ we compute $f^k_t$ and $\lambda^k_t$ as numerical solution to equation~\eqref{mh-0001} and~\eqref{mh-0002}, respectively. Details on the numerical scheme will be presented below.
	The new iterates $(w,b)^{k+1}$ are found through iteration on equations \eqref{mh-0003}--\eqref{mh-0004}, namely for $t > 0$ we define
	\begin{align*}
	b^{*}(t;\rho) = b^k(t) - \rho^* \left(\gamma b^k(t) + \int_{\R} \lambda_{T-t}(x) \sigma'\big( 
	w^k(t) x+ b^k(t) 
	\big) f_t(x) \de x  \right),
	\end{align*}
	$b^{*}(0)=0,$ and similarly  $w^*.$ Here, $\rho^*>0$ is a stepsize parameter chosen using backtracking line search, e.g.~the Armijo rule, in order to minimize the reduced cost functional 
	\begin{align*}
	\rho^* = \argmin\limits_{\rho>0} \left( 	\int_{\R} \tilde{\ell}(x) \de f_T(x;\rho) + \frac{\gamma}2 \int_0^T \| w^{*}(t;\rho) \|^2 + \|b^{*}(t;\rho) \|^2 \de t \right)
	\end{align*}
	where $f_T(x;\rho)$ is the solution to~\eqref{mh-0001} for $(w,b)=(w^*(t;\rho),b^*(t;\rho)).$ 
	The new iterates are then obtained by 
	\begin{align*}
	(w,b)^{k+1}(t) := (w^*,b^*)(t;\rho^*), \ \forall t\geq0.
	\end{align*}
	The procedure is repeated $k=0,1,\dots$  until the error $e^{(k)}$ is below a given tolerance $TOL$:
	\begin{align*}
	e^{(k+1)} := \frac{\| (w,b)^{k+1} - (w,b)^k \|_{C^0(0,T)}}{\| (w,b)^{k+1}\|_{C^0(0,T)}} \leq TOL. 
	\end{align*}
	For more details on the iterative scheme described above we refer, e.g., to~\cite{NocedalWright}.
	\begin{remark}
	In the case $\sigma(x)=x$ further simplifications 
	are possible. In fact, we may derive explicit equations for the evolution of the 
	$k$--th moment of $\lambda \; f$ given by 
	\begin{align*}
	\partial_t \int_{\R} x^k \lambda_{T-t}(x) f_t(x) \de x = - (k+1) w(t) \int_{\R} x^k \lambda_{T-t}(x) f_t(x) \de x. 
	\end{align*}
	This allows to obtain $(w,b)$ in closed form
	\begin{align}\label{mh-0005}
	b(t) &= \frac{1}\gamma \exp( w(t) ) \int_{\R} \lambda_T(x) f_0(x) \de x, \\
	w(t) \exp( - 2 w(t) ) &= \frac{1}\gamma  \int_{\R} x \lambda_T(x) f_0(x) \de x. 	\label{mh-0006}
	\end{align}
	In this case it is sufficient to iterate equations~\eqref{mh-0002} and equations~\eqref{mh-0005}--\eqref{mh-0006} removing the need to solve equation~\eqref{mh-0001}.
	\end{remark}
	
	The numerical solution of the PDEs~\eqref{mh-0001} and~\eqref{mh-0002} is computed with a third--order finite volume scheme~\cite{CPSV:cweno}, which is briefly described below. Both equations are recast in the following compact formulation:
	\begin{equation} \label{eq:generalPDE}
	\partial_t u(t,x) + \partial_x \mathcal{L}(u(t,x),t,x) = 0,
	\end{equation}
	with $\mathcal{L}$ linear operator with respect to $u$. Since the two PDEs are decoupled, they can be solved simultaneously. Application of the method of lines to~\eqref{eq:generalPDE} on discrete cells $\Omega_{j}$, defining a discretization of the physical domain $\Omega$, leads to the coupled system of ODEs
	\begin{equation} \label{eq:semidiscrete}
	\frac{\mathrm{d}}{\mathrm{d}t} \overline{U}_j(t) = - \frac{1}{\Delta x} \left[ \mathcal{F}_{j+\frac12}(t) - \mathcal{F}_{j-\frac12}(t) \right],
	\end{equation}
	where $\overline{U}_{j}(t)$
	is the approximation of the cell average of the exact solution $u$ in the cell $\Omega_{j}$ at time $t$.
	Here, $\mathcal{F}_{j+\frac12}(t)$ approximates $\mathcal{L}(u(t,x_{j+\frac12}),t,x_{j+\frac12})$ with suitable accuracy and is computed as a function of the boundary extrapolated data $U_{j+\frac12}^{\pm}(t)$, i.e.
	$$
	\mathcal{F}_{j+\frac12}(t) = \mathcal{F}(U_{j+\frac12}^{+}(t),U_{j+\frac12}^{-}(t))
	$$
	and $\mathcal{F}$ is a consistent and monotone numerical flux, evaluated on two estimates of the solution at the cell interface. We focus on the class of central schemes, in particular we consider $\mathcal{F}$ as a local Lax--Friedrichs flux.
	In order to construct a third--order scheme the values $U_{j+\nicefrac12}^{\pm}(t)$ at the cell boundaries are computed with the third--order CWENO reconstruction~\cite{CPSV:cweno}.
	
	System~\eqref{eq:generalPDE} is finally solved by the classical third--order (strong stability preserving) SSP Runge--Kutta with three stages~\cite{JiangShu:96}. At each Runge--Kutta stage, the cell averages are used to compute the reconstructions via the CWENO procedure and the boundary extrapolated data are fed into the Lax--Friedrichs numerical flux. The initial data are computed with the three point Gaussian quadrature. The time step $\Delta t$ is chosen fixed in order to have a fixed grid in time and to avoid a reconstruction in time of the control functions $w(t)$ and $b(t)$ between different iterates of the Gauss--Seidel approach. All the simulations are run with a CFL of $0.45$. The other parameters of the simulations are specified in each numerical example separately.
	
	\subsection{Computational results}
	
	We present three numerical experiments in order to illustrate the numerical solution of the training of the mean--field neural network and to numerically observe the theoretical findings on the controllability approach presented in Section~\ref{ssec:controllability}.
	
	As loss function we use
	\begin{equation} \label{eq:lossnumerics}
		\ell(x-y)  = | x - y |^2.
	\end{equation}
	and  the initial condition of equation~\eqref{mh-0002} is then given by 
	$$
		\lambda_0(x) = 2 x \int_\R \de g(y) - 2 \int_\R y \de g(y) = 2 x - 2 m_g
	$$
	where $m_g$ denotes the expected value of the target $g$.
	
	\begin{remark} \label{rem:loss}
		The initial condition of the equation for $\lambda$ depends only on the expected value of the target. Hence, if we consider two different targets $g_1$ and $g_2$ such that $m_{g_1}\neq m_{g_2}$, we expect to be able to recover correctly the expected value only. For example, an $L^2_x$ distance between the final state $f_T$ and the target $g$ would also lead to a dependence on the full state. However, the formal mean--field of the discrete loss function does not include this choice as shown in the previous section 
	\end{remark}
	
	\paragraph{Test 1.} In the first example we provide a numerical evidence of the controllability problem proposed in Proposition~\ref{prop:controllab1}. We choose the initial condition
	$$
		f_0(x) = \chi_{[-\frac12,\frac12]}(x)
	$$
	on the physical domain $x\in\Omega=[-2,3]$, and the target is
	$$
		g(x) = f_0(x-\beta)
	$$
	with $\beta=1$. The time at which we aim to recover the target $g$ is $T=1$. We consider a fixed time step $\Delta t = 10^{-2}$ and the space domain is discretized with $200$ cells. The regularization parameter is $\gamma = 10^{-3}$ and the tolerance for the stopping criterion is $TOL = 10^{-4}$. The maximum number of iteration of the Armijo--stepsize rule is $10$. Three different activation functions are considered, $\sigma(x)=x$, $\sigma(x)=\tanh(x)$ and $\sigma(x)=\frac{1}{1+\exp(-x)}$. The initial guess of the controls is $w^0(t)=0$ and $b^0(t) = 0$, $\forall\,t\in[0,1]$. 
	
	\begin{figure}[t!]
		\centering
		\includegraphics[width=\textwidth]{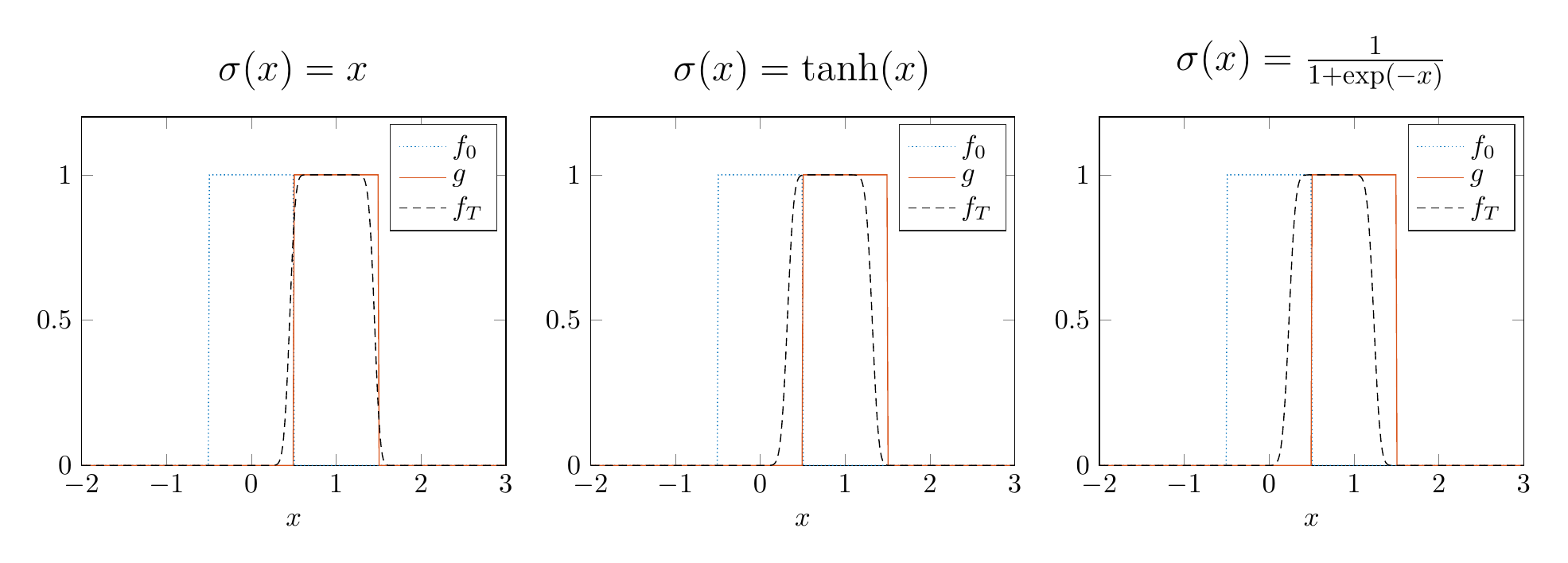}
		\caption{Final states $f_T$ (black dashed lines) at time $T=1$ obtained with the optimal controls $w(t)$ and $b(t)$ computed by the block Gauss--Seidel approach with tolerance $TOL=10^{-4}$. Three different activation functions are considered and specified in the panel titles. The blue dotted lines represent the initial state $f_0$, whereas the red solid lines represent the target $g$.\label{fig:control1_sol}}
	\end{figure}
	
	\begin{figure}[t!]
		\centering
		\includegraphics[width=\textwidth]{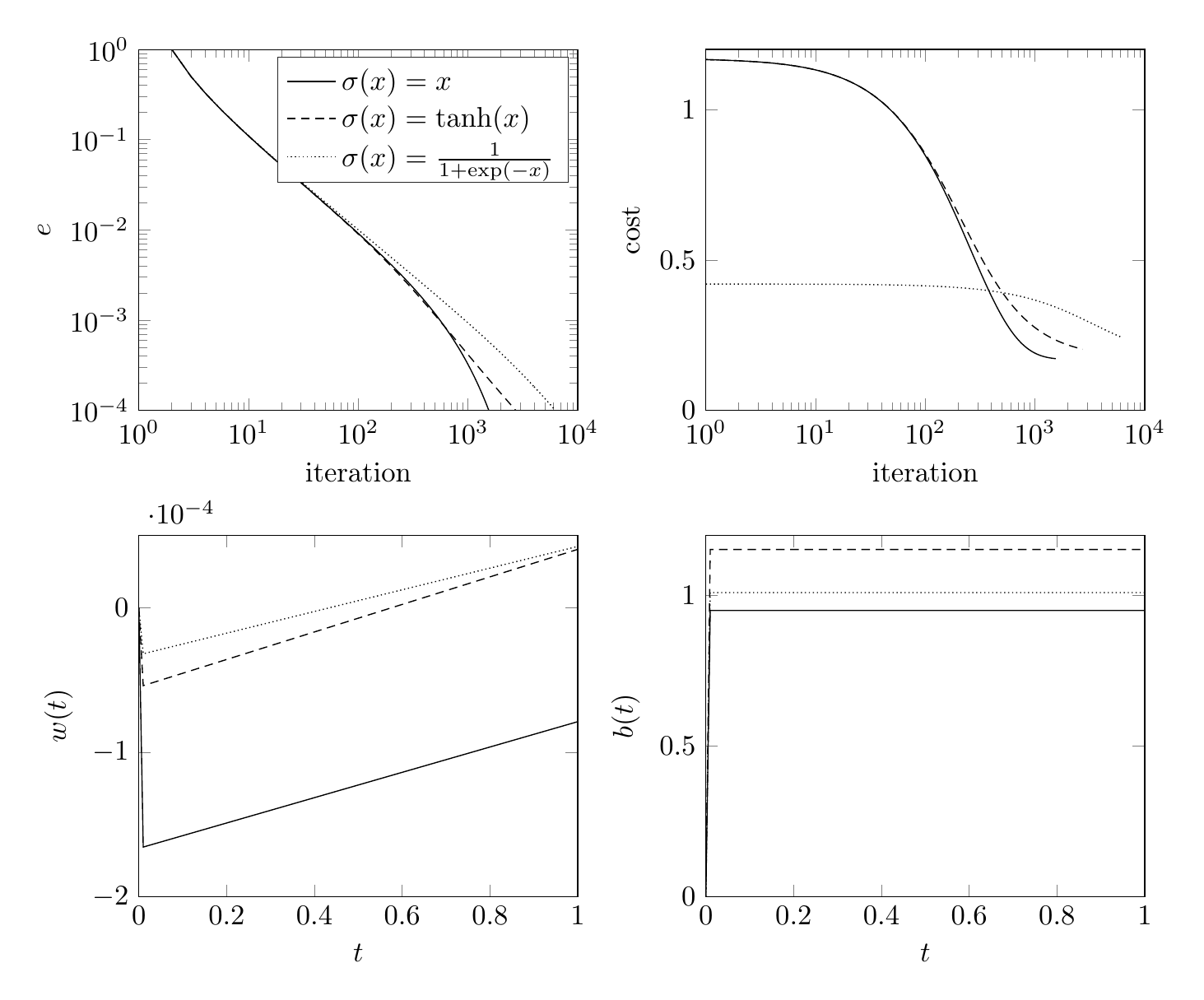}
		\caption{Top left panel: behavior of the relative errors between two consecutive iterations of the controls. Top right panel: value of the cost functionals at each iteration. Bottom panels: optimal controls $w(t)$ (left) and $b(t)$ (right). In all plots, the solid line represent the case of the identity activation function, the dashed line the hyperbolic tangent, and the dotted line the sigmoid.\label{fig:control1_analysis}}
	\end{figure}
	
	In Figure~\ref{fig:control1_sol} we compare the final states $f_T$ (black dashed lines) obtained with the three different activation functions and the optimal controls $w(t)$, $b(t)$ which are shown in the bottom panels of Figure~\ref{fig:control1_analysis}. We observe that using the identity activation function provides a better approximation of the target $g$, whereas for the other two activation functions additional iterations of the optimization procedure are required. In fact, the top panels of Figure~\ref{fig:control1_analysis} shows that, while the relative error between two iterates of the controls reaches the given tolerance $TOL$ for all the activation functions, the values of the cost functional for the case of the hyperbolic tangent and of the sigmoid are larger than the value of the cost functional obtained with the identity and are still decreasing towards a minimum value. Furthermore, we observe that the initial guess of the controls is a better choice for the sigmoid activation. The optimal controls are depicted in the bottom panels of Figure~\ref{fig:control1_analysis} and they are $w(t)\approx 0$ and $b(t)=C$ with $C$ positive constant which depends on the choice of the activation function. Observe, in particular, that $C\approx1$ for the identity activation, which means that equation~\eqref{mh-0001} reduces
	$$
		\partial_t f_t(x) + C \partial_x f_t(x) = 0
	$$
	whose solution at $T=1$ is $f_T(x) = f_0(x-C) \approx g(x) = f_0(x-1)$. This result is consistent with Proposition~\ref{prop:controllab1}.
	
	\paragraph{Test 2.} In the second example we provide a numerical evidence of the controllability problem proposed in Proposition~\ref{prop:controllab2}. We choose the initial condition
	$$
		f_0(x) = \frac{1}{\sqrt{2\pi s^2}} e^{-\frac{(x-\mu)^2}{2s^2}}
	$$
	on the physical domain $x\in\Omega=[-2,3]$ with $s=0.1$ and $\mu=1$. The target is
	$$
		g(x) = f_0(xe^\alpha+(1-e^\alpha)\mu)e^\alpha
	$$
	with $\alpha=0.25$. The time at which we aim to recover the target $g$ is $T=1$. We consider a fixed time step $\Delta t = 10^{-2}$ and the space domain is discretized with $400$ cells. The regularization parameter is $\gamma = 10^{-3}$ and the tolerance for the stopping criterion is $TOL = 10^{-4}$. The maximum number of iteration of the Armijo--stepsize rule is $10$. Due to Lemma~\ref{lem:mconserved} and Proposition~\ref{prop:controllab2} we take the identity activation functions $\sigma(x)=x$. The initial guess of the controls is $w(t)=0$ and $b(t) = 0$, $\forall\,t\in[0,1]$.
	
	\begin{figure}[t!]
		\centering
		\includegraphics[width=\textwidth]{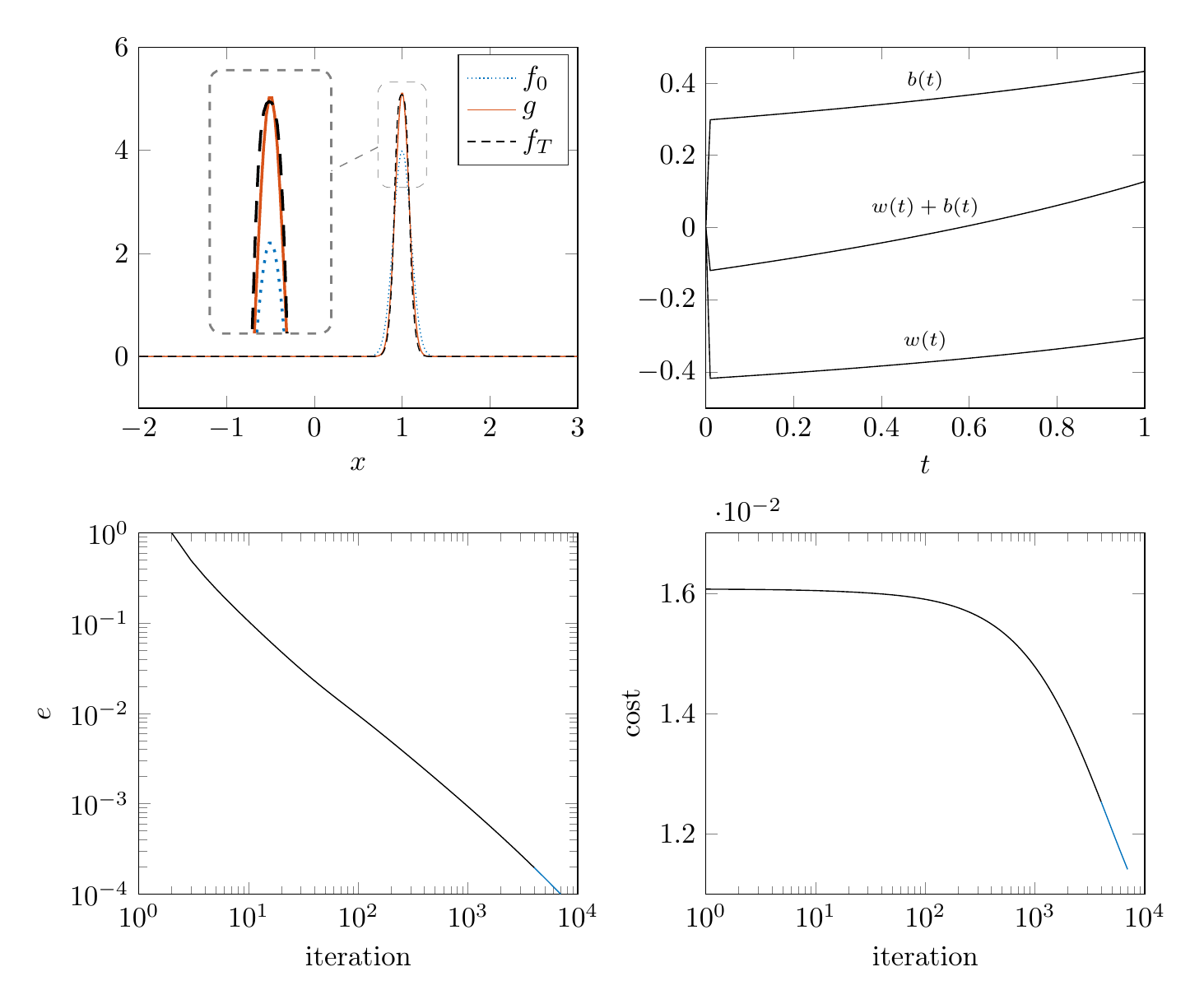}
		\caption{Top left: initial state $f_0$ (blue dotted line), target $g$ (solid red line) and final state $f_T$ (black dashed line) at final time $T=1$. Top right: optimal controls $w(t)$ and $b(t)$, and their sum on the time interval $[0,1]$. Bottom left: relative error of two consecutive iterations of the controls. Bottom right: behavior of the cost functional over the iterations.\label{fig:control2}}
	\end{figure}
	
	In the top left panel of Figure~\ref{fig:control2} we observe that the final state $f_T$ (black dashed line) recovers the target $g$. The optimal controls $w(t)$ and $b(t)$ are shown in the top right panel and chosen when the stopping criterion is met, see the bottom left panel. According to Lemma~\ref{lem:mconserved} and Proposition~\ref{prop:controllab2} we expect to have $w(t)=-b(t)\mu$ and, since $\mu=1$, we notice that indeed $w(t)+b(t)\approx 0$. The relative error between cost functional, see the bottom right panel of Figure~\ref{fig:control2}, is  monotone decreasing.
	
	
	\paragraph{Test 3.} In the last numerical example we build an artificial test and consider exact controls
	$$
		w_e(t) = e^t-1, \quad b_e(t) = -5t^2+t
	$$
	to evolve the PDE~\eqref{mh-0001} up to time $T=1$ starting from a Beta distribution as initial condition:
	$$
		f_0(x) = \frac{x^{a_1-1} (1-x)^{a_2-1}}{B(a_1,a_2)}
	$$
	where $B$ is the Beta function, and $a_1=2$, $a_2=5$. We obtain a numerical final state that we use as target to initialize the adjoint equation~\eqref{mh-0002}. Finally, the optimality system is solved with the block Gauss--Seidel approach in order to recover the exact controls $w(t)$ and $b(t)$. The physical domain is again $x\in\Omega=[-2,3]$. We consider a fixed time step $\Delta t = 10^{-2}$ and the spatial domain is discretized with $400$ cells. The regularization parameter is considered different for the two controls, precisely we set $\gamma_w = 1$ and $\gamma_b = 10^{-4}$. The tolerance for the stopping criterion is $TOL = 10^{-4}$. The maximum number of iteration of the Armijo--stepsize rule is $10$. For this numerical test we choose the sigmoid activation function, i.e.~$\sigma(x)=\frac{1}{1+\exp(-x)}$. 
	
	\begin{figure}[t!]
		\centering
		\includegraphics[width=\textwidth]{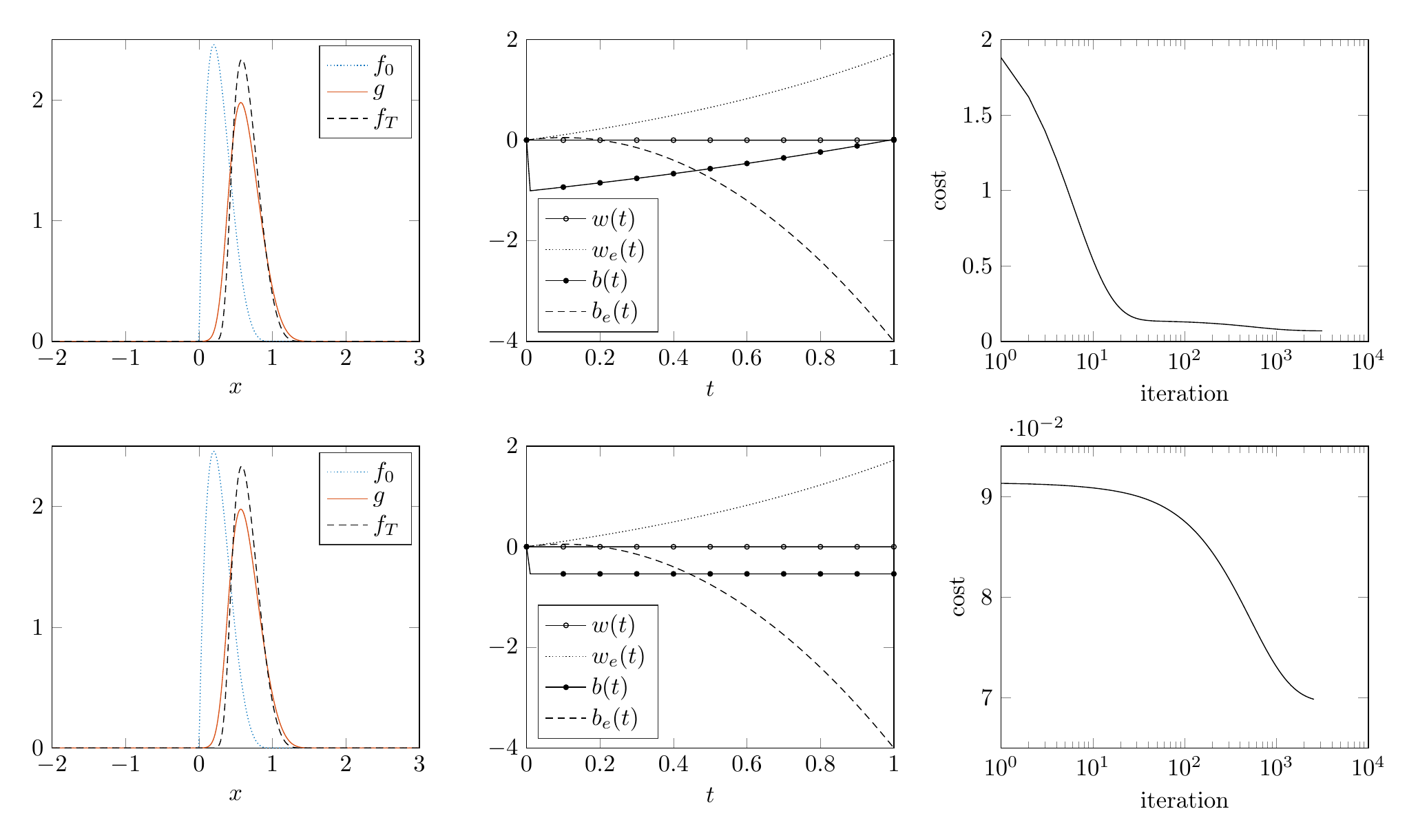}
		\caption{Top row: initial guess for the controls is $w(t)=b(t)=t$, $\forall\,t\in[0,1]$. Bottom row: initial guess for the controls is $w(t)=b(t)=0$, $\forall\,t\in[0,1]$. Left column: initial state $f_0$ (blue dotted line), target $g$ (solid red line) and final state $f_T$ (black dashed line) at time $T=1$. Center column: optimal controls $w(t)$ and $b(t)$, and the exact controls $w_e(t)$ and $b_e(t)$ on the time interval $[0,1]$. Right column: behavior of the cost functional over the iterations.\label{fig:control3}}
	\end{figure}

	In Figure~\ref{fig:control3} we show the results of the numerical experiment obtained with two different initial controls. In particular, the top row panels refer to $w^0(t)=b^0(t)=t$, $\forall\,t\in[0,1]$, whereas the bottom row panels refer to the case $w^0(t)=b^0(t)=0$, $\forall\,t\in[0,1]$. We notice that in both cases the final state $f_T$, black dashed line in the top left panel, reproduces the expected value of the target, but the method is failing in estimating the  variance and the height of the extremal point. This is a consequence of the particular choice of the loss function, as already pointed out in Remark~\ref{rem:loss}.  The optimal controls computed at the end of the optimization procedure are shown in the center column panels and compared with the exact controls $w_e(t)$ and $b_e(t)$. In both cases, the method provides a constant weight, precisely $w(t)\approx 10^{-3}$, whereas $b(t)$ differs, depending on the choice of the initial guess. This result show the possible existence of multiple optimal controls solving the same task of recovering the target $g$.
		
	\section{Conclusion and Future Work} \label{sec:conclusion}
	
	In this work we have proposed and analyzed a mean--field description of residual neural networks. The limit is performed on the number of data, and the well--posedness of the resulting Vlasov--type equation is discussed. We have proved existence and uniqueness of weak solutions, continuous dependence on the initial condition and on the parameters, and the convergence of the solution of the discrete system to the solution of the PDE.
	
	Furthermore, we have tackled the problem of the training of the mean--field neural network using a controllability and an optimal control point of view. We have shown existence of the minimizers and proposed a computational approach based on first--order optimality conditions to numerically optimize the unknown parameters. Finally, we have performed numerical experiments on the derived equations.
	
	We expect that further analysis of the  mathematical formulations of machine learning models at different scales is a useful tool to break the complexity of the methods on discrete level and to provide theoretical foundations, in--depth understanding, analysis and improvements of existing approaches. In particular, the present work opens several research perspectives, as for instance the study of the convergence of the optimal solutions of the discrete training process to the solutions of the mean--field optimal control problem via Gamma--convergence, or the definition of different loss functions at the mean--field level and the computational comparison between the discrete and the mean--field training.
	
	\section*{Acknowledgments}
	M.H. thanks the Deutsche Forschungsgemeinschaft (DFG, German Research Foundation) for the financial support through 20021702/GRK2326,  333849990/IRTG-2379, HE5386/18-1,19-1,22-1,23-1 and under Germany’s Excellence Strategy EXC-2023 Internet of Production 390621612.
	G.V. acknowledges the support of the INdAM--GNCS group and of the PRIN2017 project 2017KKJP4X funded by the Italian MUR (Ministry of University and Research). The work of A.T. was supported by a postdoc fellowship of the German Academic Exchange Service (DAAD) (PKZ 91817986).

	\bibliography{literaturmean.bib}
	\bibliographystyle{abbrv}

\end{document}